\documentclass[reqno,11pt]{amsart}
 
\usepackage{amsmath}
\usepackage{amsthm}  
\usepackage{verbatim}
\usepackage{enumerate} 
\usepackage{mathtools}  
\usepackage{amssymb}
\usepackage{mathrsfs}
\usepackage[top=1.3in, bottom=1.0in, left=0.7in, right=0.7in]{geometry}  
\usepackage[backref=page, colorlinks]{hyperref}
\hypersetup{
	colorlinks,
	citecolor=blue,
	linkcolor=red
}   
\numberwithin{equation}{section}
\usepackage{xypic}
\usepackage{bm}
\usepackage{tikz}
\usetikzlibrary{decorations.pathreplacing}
\usepackage{xcolor}
\usepackage{float}
\usepackage{cleveref}
\usepackage{comment}
\usepackage[abs]{overpic}

\newtheorem{theorem}{\textbf{Theorem}}[section]
\newtheorem{theorem*}{\textbf{Theorem}}

\newtheorem{proposition}[theorem]{\textbf{Proposition}}
\newtheorem{lemma}[theorem]{\textbf{Lemma}}
\newtheorem{question}[theorem]{Question}

\newtheorem{corollary}[theorem]{\textbf{Corollary}}
\newtheorem{remark}[theorem]{\textbf{Remark}}

\newtheorem{example}[theorem]{\textbf{Example}}

\newtheorem{definition/proposition}[theorem]{\textbf{Definition/Proposition}}

\newcommand{\Addresses}{{

\bigskip
		\footnotesize

        Youlin Li, \par\nopagebreak
	    \textsc{School of Mathematical Sciences, Shanghai Jiao Tong University, Shanghai 200240, China}\par\nopagebreak
		\textit{E-mail address}: \href{liyoulin@sjtu.edu.cn}{liyoulin@sjtu.edu.cn}

		\bigskip
		\footnotesize

        Chi Zhang, \par\nopagebreak
	    \textsc{School of Mathematical Sciences, Shanghai Jiao Tong University, Shanghai 200240, China}\par\nopagebreak
		\textit{E-mail address}: \href{2460889966651@sjtu.edu.cn}{2460889966651@sjtu.edu.cn}

}}
\title{Characterizing slopes for Legendrian knots}
\author{Youlin Li and Chi Zhang}
\begin{document}
	\maketitle
    
\begin{abstract} 
We establish a criterion relating smooth and contact characterizing slopes under a uniqueness assumption. Let $L$ be a Legendrian representative of a knot $K\subset S^3$ with standard contact structure, and assume that the isotopy class of $L$ is uniquely determined by its classical invariants: the Thurston--Bennequin invariant $tb(L)$ and the rotation number. Then, for any non-zero rational number $r$, if $r+tb(L)$ is a smooth characterizing slope for $K$, it becomes a contact characterizing slope for $L$. As applications, we study the characterizing slopes for Legendrian representatives of the unknot, trefoil, figure-eight knot, cinquefoil, $5_2$, and $\overline{5_2}$.
\medskip

\end{abstract}
\section{Introduction}
A slope $\frac{p}{q}$ is called a \textit{characterizing slope} for a given knot $K$ in $S^3$ if whenever the $\frac{p}{q}$–surgery on a knot $K'$ in $S^3$ is homeomorphic to the $\frac{p}{q}$–surgery on $K$ via an orientation preserving homeomorphism, then $K'$ is isotopic to $K$. By the results in \cite{KMOS} and \cite{OSZ19}, any rational number is characterizing for the unknot, the trefoils and figure eight knot. In \cite{NiZh14, NiZh23}, Ni and Zhang showed that not all nontrivial slopes for a torus knot are characterizing slopes. However, if $r>s>1$, they showed that a nontrivial slope $\frac{p}{q}$ is a characterizing slope for the torus knot $T_{r,s}$ if $\frac{p}{q} > \frac{30(r^{2}-1)(s^{2}-1)}{67}$. They obtain more specific information about the characterizing slopes of the positive cinquefoil $T_{5,2}$. In \cite{BaSi24}, Baldwin and Sivek proved that any rational number, other than a positive integer, is characterizing for the knot $5_2$.

Contact Dehn surgery along Legendrian links is a very efficient way to construct contact 3-manifolds \cite{Geiges}. Let $L$ be a Legendrian knot in $(S^3, \xi_{st})$, the contact $r$-surgery on $L$, with $r\in \mathbb{Q}\setminus\{0\}$, is the smooth 3-manifold obtained from $S^3$ by topological $r+tb(L)$ surgery on $L$, and extending the restriction of $\xi_{st}$ on the complement of the standard neighborhood of $L$ by a tight contact structure on the surgery torus. In this paper, we denote the resulted contact 3-manifolds by $L(r)$. 
If $r=\frac{1}{q}$, then the resulted contact 3-manifold $L(\frac{1}{q})$ is uniquely determined. However, in the general case, $L(r)$ is not unique. Instead, it is a finite collection of contact 3-manifolds with the same underlying smooth closed 3-manifold.

The collection $L(r)$ is said to be contactomorphic to the collection $L'(r')$ if there exists a bijection between the elements in $L(r)$ and $L'(r')$ such that paired elements are contactomorphic. A contact surgery slope $r\in \mathbb{Q}$ is called a \textit{characterizing slope} if whenever the collection $L(r)$ is contactomorphic to $L'(r)$ for some Legendrian knot $L'$, then $L'$ is Legendrian isotopic to $L$ in $(S^3,\xi_{st})$. Since the contact surgery along a Legendrian knot is independent of its orientation, we only consider the unoriented Legengendrian knots in this paper, and assume the rotation numbers of all Legendrian knots are positive.

In \cite{Et08}, Etnyre proved that the contact $(+1)$-surgeries along a pair of distinct Legendrian knots with the same classical invariants often produce contactomorphic contact manifolds. So $+1$ is not a characterizing slope for these Legendrian non-simple knots.

In \cite{CEK}, Casals, Etnyre and Kegel constructed infinitely many pairs of distinct Legendrian knots in $(S^3, \xi_{st})$ so that the contact $(-1)$-surgeries along any pair of them yield the same Stein traces, in particular, the same contact 3-manifolds. That means $-1$ is not a characterizing slope for these Legendrian knots. They also obtain some characterizing slopes for Legendrian simple knots, including Legendrian unknots, Legendrian trefoils, Legendrian figure eight knots, and some hyperbolic knots. See \cite[Theorems 1.8, 1.10]{CEK}.

In this paper, we give a criterion for a non-zero rational number to be a contact characterzing slope for a Legedrian knot.

\begin{theorem}\label{Thm:anyknot}
Suppose $K\subset S^3$ is a knot and $L$ is a Legendrian representative of $K$ in $(S^3, \xi_{st})$. If a non-zero rational number $\frac{p}{q}$ satisfies that $\frac{p}{q}+tb(L)$ is a (smooth) characterizing slope for $K$, and $L(\frac{p}{q})$ is contactomorphic to $L'(\frac{p}{q})$ for some Legendrian knot $L'$ in $(S^3, \xi_{st})$, then $L'$ is smoothly isotopic to $L$, $tb(L')=tb(L)$ and $rot(L')=rot(L)$.
\end{theorem}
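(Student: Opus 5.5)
Write $L(\frac{p}{q})=\{(M,\xi_1),\dots,(M,\xi_n)\}$ with $M=S^3_{p/q+tb(L)}(K)$, and similarly for $L'$ with underlying manifold $M'=S^3_{p/q+tb(L')}(K')$. The plan is to extract from the contactomorphism of collections first a topological statement, which will give $tb(L')=tb(L)$ and $K'\simeq K$, and then a contact invariant, which will give equality of rotation numbers.

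\textbf{Step 1: equal Thurston--Bennequin invariants.} A contactomorphism between paired elements is in particular an orientation-preserving diffeomorphism $M\cong M'$. Hence
\[
|H_1(M)|=|p+q\,tb(L)| \quad\text{and}\quad |H_1(M')|=|p+q\,tb(L')|
\]
agree, with the convention that the group is infinite when the quantity is $0$. This leaves $tb(L')=tb(L)$ or $tb(L')=-tb(L)-2p/q$. To exclude the second possibility I would use a contact invariant.

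\textbf{Option (a): the $d_3$ invariant.} By Ding--Geiges--Stipsicz the $d_3$ invariant of contact $r$-surgery is computed via a surgery diagram. It depends on $tb$, $rot$ and the continued fraction of $r$, and it involves the signature of the linking matrix. The signature changes sign according to whether $p/q+tb$ is positive or negative, which should distinguish the two cases.

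\textbf{Option (b): the Euler class.} The first Chern class $c_1(\xi)$ lies in $H^2(M)\cong \mathbb{Z}/|p+q\,tb|$, and $c_1(\xi)=rot(L)\cdot\mu^*$, with slight modifications for the extra unknots in the surgery diagram.

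A cleaner route would be to exploit that the surgery slope is determined up to sign by the linking form of $M$. The linking form is $-q/p'$ with $p'=p+q\,tb$. If $tb'=-tb-2p/q$, then $p+q\,tb'=-p'$, so the linking forms are $\mp q/p'$. These agree only if $-1$ is a square modulo $p'$ times a suitable factor, so this does not fully work by itself. I therefore expect to need $d_3$ or the sign of surgery on $b_2^\pm$-type data; this is the main obstacle. Once $tb(L')=tb(L)$ is known, $M'$ is $(p/q+tb(L))$-surgery on $K'$ and is orientation-preservingly homeomorphic to the same surgery on $K$. Since that slope is characterizing for $K$, it follows that $K'$ is isotopic to $K$.

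\textbf{Step 2: equal rotation numbers.} Now $K=K'$ and the slopes agree, so $M=M'$, and the contact structures in both collections arise from the same continued-fraction surgery recipe. The Chern class, or more invariantly the $\mathrm{Spin}^c$ structure or homotopy class of plane fields, is computed from rotation numbers: the stabilized Legendrian push-offs in the DGS algorithm have rotation numbers $rot(L)\pm$(stabilization choices). As $L$ ranges over its choices of stabilizations, the set of $\mathrm{Spin}^c$ structures $\{\mathfrak{s}_{\xi_i}\}$ is an explicit set of classes, of the form $rot(L)+\{\text{symmetric set}\}$ in $H^2(M)$. This set is determined by $rot(L)$ up to the sign ambiguity coming from orientation reversal, which we have normalized away by assuming rotation numbers are nonnegative. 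The contactomorphism of collections, however, only identifies these sets up to a diffeomorphism of $M$, which may act nontrivially on $H^2(M)$. To get around this I would compare a diffeomorphism-invariant quantity instead: the multiset of $d_3$ invariants, or the multiset of orders/divisibilities of $c_1$. The $d_3$ formula contains the term $rot^2$ through $c^2$, so the maximum (or the sum) of the $d_3$ values over the collection determines $rot(L)^2$. Since rotation numbers are nonnegative, this gives $rot(L')=rot(L)$.

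Where $c^2$ is rational because $b_2$ of the surgered manifold is torsion, the formula still applies. The case $p/q+tb=0$ needs a separate treatment using the divisibility of $c_1$ in $H^2\cong\mathbb{Z}$.
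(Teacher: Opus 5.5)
\textbf{Gap.} The missing step is the one you flag as the main obstacle: you never rule out $tb(L')=-tb(L)-2p/q$. The rest of your outline matches the paper. That is, you get $|H_1|$ to reduce to $tb'=tb$, apply the characterizing slope, recover $rot$ from the \emph{sum} of $d_3$ over the collection, and use the Euler class in $H^2\cong\mathbb{Z}$ when $p/q+tb=0$. (For $rot$, use the sum: the linear terms cancel under reversing all stabilization signs, and the quadratic coefficient is a nonzero multiple of $\mathbf{1}^TQ^{-1}\mathbf{1}$.) But without this exclusion the characterizing-slope hypothesis cannot even be invoked.

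Your Option (a) heuristic is false as stated. The signature shift can be absorbed by the $rot^2/(p+q\,tb)$ terms, which enter with opposite signs for $L$ and $L'$. For example, contact $(+1)$-surgeries on Legendrian knots with $(tb,rot)=(2,3)$ and $(-4,3)$ both have $d_3=0$. Option (b) only constrains the order of the Euler class, since the diffeomorphism may act by any unit on $H^2\cong\mathbb{Z}/|p+q\,tb|$. And, as you note, the linking form alone fails whenever $-1$ is a square modulo $|p+q\,tb|$. For instance, the contact $(+6)$-surgeries on the $tb=-1$ right-handed trefoil and the $tb=-11$ unknot live on the same lens space.

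\textbf{Missing idea.} You need to combine the two obstructions arithmetically.
\begin{enumerate}
\item Since $tb'$ is an integer, this case forces $q\in\{1,2\}$.
\item Your linking-form observation, pushed through, gives some $a$ with $a^2\equiv-1 \pmod{|p+q\,tb|}$. Hence $3\nmid|p+t|$ if $q=1$, and $|p+2t|\equiv 1\pmod 4$ if $q=2$. The paper derives exactly these facts from the Ni--Wu $d$-invariant formula (Propositions~\ref{Prop:surcharanyknot} and~\ref{Prop:sucharanyknot2}).
\item The other half is an explicit $d_3$ computation for the Ding--Geiges--Stipsicz expansions of $L(\frac pq)$ and $L'(\frac pq)$ with $tb'=-t-\frac{2p}{q}$, tracking the signatures of $Q$ and $Q'$ (Lemmas~\ref{Lem:d3anyknotq=1} and~\ref{Lem:d3anyknotq=2}). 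For $q=1$, matching $d_3$-sets forces $6|p+t|$ to be a sum of two squares; for $p\ge 2$ the relation is $[r\pm(p-1)]^2+[r'\pm(p-1)]^2=6|p+t|$. For $q=2$, after a pairing argument among the four contact structures when $p\ge5$, it forces $3|p+2t|$ to be a sum of two squares.
\item Combining: $6|p+t|$ then contains the prime $3$ to the first power, contradicting the two-squares theorem. Likewise $3|p+2t|\equiv 3\pmod 4$, which is never a sum of two squares.
\end{enumerate}

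Neither ingredient can be dropped. In the two examples above, the first is excluded only by the linking form ($-1$ is not a square mod $3$). The second is excluded only by $d_3$ ($30$ is not a sum of two squares).
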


Immediately, we have

\begin{corollary}\label{Thm:anyknot1}
Suppose $K\subset S^3$ is a knot and $L$ is a Legendrian representative of $K$ in $(S^3, \xi_{st})$. If the isotopy class of $L$ is uniquely determined by its Thurston-Bennequin invariant and rotation number, then any non-zero rational number $\frac{p}{q}$ such that $\frac{p}{q}+tb(L)$ is a (smooth) characterizing slope for $K$ is a (contact) characterizing slope for $L$.
\end{corollary}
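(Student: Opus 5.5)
The statement to prove is the Corollary, and it follows directly from Theorem~\ref{Thm:anyknot}. Fix a non-zero rational $\frac{p}{q}$ such that $\frac{p}{q}+tb(L)$ is a smooth characterizing slope for $K$. Let $L'$ be any Legendrian knot in $(S^3,\xi_{st})$ with $L(\frac{p}{q})$ contactomorphic to $L'(\frac{p}{q})$. We must show that $L'$ is Legendrian isotopic to $L$.

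First, apply Theorem~\ref{Thm:anyknot} to the pair $L, L'$; its hypotheses are exactly the ones just assumed. It gives that $L'$ is smoothly isotopic to $L$, hence $L'$ is a Legendrian representative of $K$, and also that $tb(L')=tb(L)$ and $rot(L')=rot(L)$.

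Next, interpret the uniqueness hypothesis as saying that Legendrian representatives of $K$ are classified by $(tb, rot)$ at the value $(tb(L),rot(L))$. In other words, any Legendrian knot smoothly isotopic to $K$ with the same classical invariants as $L$ is Legendrian isotopic to $L$.

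There is a sign issue with the rotation number. Surgery does not depend on orientation, and the paper treats unoriented knots with rotation numbers taken positive, so $rot$ is really $|rot|$. If an oriented representative of $L'$ has $rot(L')=-rot(L)$, reverse the orientation of $L'$ (or of $K$) to match. This does not change the unoriented Legendrian isotopy class or the surgery. Under this convention the uniqueness assumption applies, so $L'$ is Legendrian isotopic to $L$, and $\frac{p}{q}$ is a contact characterizing slope for $L$.

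The only care needed is this orientation/sign bookkeeping. If $K$ is not invertible, one should check that the uniqueness hypothesis refers to the unoriented Legendrian isotopy class, which is the paper's convention. All the substantive work sits in Theorem~\ref{Thm:anyknot}, which we are allowed to use.
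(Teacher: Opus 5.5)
Your argument is correct and is the same as the paper's, which states the corollary as immediate from Theorem~\ref{Thm:anyknot}: that theorem gives that $L'$ is smoothly isotopic to $L$ with $tb(L')=tb(L)$ and $rot(L')=rot(L)$, and the uniqueness hypothesis then gives the Legendrian isotopy. Your remark about orientations agrees with the paper's convention of working with unoriented Legendrian knots whose rotation numbers are taken non-negative.
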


As applications, we can show that any non-zero rational number is a characterizing slope for any Legendrian representatives of unknot, trefoil and figure eight knot.

\begin{corollary}\label{Thm:unknot}
Let $L\subset (S^3,\xi_{st})$ be a Legendrian representative of the unknot,  the right handed trefoil, the left handed trefoil, or the figure eight knot. Then any non-zero rational number $\frac{p}{q}$ is a characterizing slope for $L$.
\end{corollary}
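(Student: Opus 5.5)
Corollary~\ref{Thm:unknot} will follow from Corollary~\ref{Thm:anyknot1} once two inputs are in place for each of the four knot types $K$: (a) every Legendrian representative of $K$ is determined up to Legendrian isotopy by $tb$ and $rot$, i.e.\ $K$ is Legendrian simple; and (b) every rational number is a smooth characterizing slope for $K$. Given a Legendrian representative $L$ of $K$ and a non-zero rational $\frac{p}{q}$, the number $\frac{p}{q}+tb(L)$ is then a smooth characterizing slope for $K$ by (b). Together with (a), Corollary~\ref{Thm:anyknot1} shows that $\frac{p}{q}$ is a contact characterizing slope for $L$.

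For (a), I will cite the classification results rather than prove anything. Eliashberg--Fraser classify Legendrian unknots: they are determined by $tb$ and $rot$, with $tb\le -1$ and $|rot|\le -tb-1$ of matching parity. Etnyre--Honda show that torus knots, in particular both trefoils, and the figure eight knot are Legendrian simple.

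For (b), I will cite the smooth results already quoted in the introduction. Kronheimer--Mrowka--Ozsv\'ath--Szab\'o \cite{KMOS} treat the unknot. Ozsv\'ath--Szab\'o \cite{OSZ19} treat the trefoils and the figure eight knot. Together these give that every slope in $\mathbb{Q}$ is characterizing for each of these knots. This covers every possible value of $\frac{p}{q}+tb(L)$, including the value $0$ that occurs when $\frac{p}{q}=-tb(L)$.

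The only point needing care is this $0$-slope case, so I will check that the cited smooth statements cover slope $0$. The hypothesis $\frac{p}{q}\neq 0$ is used only to make contact surgery defined; the resulting smooth slope $\frac{p}{q}+tb(L)$ may well be $0$. As I understand them, the cited results give all rational slopes, including $0$: for the unknot this is Gabai's Property R together with \cite{KMOS}, and \cite{OSZ19} covers the trefoil and figure eight. So no step is a genuine obstacle, and the proof reduces to assembling these citations.
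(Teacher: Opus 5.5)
Your proposal is correct and follows essentially the same route as the paper. The paper also cites Eliashberg--Fraser and Etnyre--Honda for Legendrian simplicity, and \cite{KMOS} and \cite{OSZ19} (Theorems~\ref{Thm:surcharunknot} and~\ref{Thm:surchartrefoil}, both stated for every rational $r$, so slope $0$ included) for the smooth characterizing slopes, and then applies Corollary~\ref{Thm:anyknot1} directly.
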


This answers \cite[Question 5.4]{CEK}. We also show that many rational numbers are characterizing slopes for Legendrian representatives of the knots $5_2$, $\overline{5_2}$, $T_{5,-2}$ and $T_{5,2}$.

\begin{corollary}\label{Thm:52} We determine partial characterizing slopes for Legendrian representatives of  $5_2$ and $\overline{5_2}$.\\
(1) Let $L\subset (S^3,\xi_{st})$ be a Legendrian representative of $5_2$. Then any non-integral rational number $\frac{p}{q}$ is a characterizing slope for $L$. If $p$ is a non-zero integer satisfying that $p+tb(L)\leq 0$, then $p$ is a characterizing slope for $L$. \\
(2) Let $L\subset (S^3,\xi_{st})$ be a Legendrian representative of $\overline{5_2}$ with non-maximal Thurston-Bennequin invariant. Then any non-integral rational number $\frac{p}{q}$ is a characterizing slope for $L$. If $p$ is a non-zero integer satisfying that $p+tb(L)\geq 0$, then $p$ is a characterizing slope for $L$.\\
(3) Let $L\subset (S^3,\xi_{st})$ be a Legendrian representative of $\overline{5_2}$ with maximal Thurston-Bennequin invariant. Then $-1$ is a characterizing slope for $L$. 
\end{corollary}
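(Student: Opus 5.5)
The plan is to deduce all three parts from Theorem~\ref{Thm:anyknot} and Corollary~\ref{Thm:anyknot1}. We combine them with two external inputs: Baldwin--Sivek's smooth result for $5_2$ \cite{BaSi24}, and the classification of Legendrian twist knots (Etnyre--Ng--V\'ertesi). Both $5_2$ and $\overline{5_2}$ are twist knots, so that classification applies to them.

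\textbf{Smooth slopes.} By \cite{BaSi24}, every rational number other than a positive integer is a smooth characterizing slope for $5_2$. Orientation-reversing homeomorphisms show that $s$ is characterizing for $K$ if and only if $-s$ is characterizing for the mirror $\overline{K}$. Hence every rational number other than a negative integer is characterizing for $\overline{5_2}$.
\begin{itemize}
\item Let $\frac pq$ be non-integral and $L$ any representative. Since $tb(L)\in\mathbb Z$, the slope $\frac pq+tb(L)$ is also non-integral, so it is characterizing for the relevant knot.
\item For $5_2$ and a non-zero integer $p$ with $p+tb(L)\le 0$, the slope $p+tb(L)$ is not a positive integer. (The slope $0$ is allowed, since $0$ is not a positive integer.)
\item For $\overline{5_2}$ with $p+tb(L)\ge 0$, the slope $p+tb(L)$ is not a negative integer.
\end{itemize}
In each of these cases the hypothesis of Theorem~\ref{Thm:anyknot} on the smooth slope is satisfied.

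\textbf{Parts (1) and (2).} Here I would invoke the Etnyre--Ng--V\'ertesi classification. From it I expect to read off two facts. First, every Legendrian representative of $5_2$ is determined up to Legendrian isotopy by $(tb,rot)$, i.e.\ $5_2$ is Legendrian simple. Second, for $\overline{5_2}$ the only failure of simplicity occurs at maximal $tb$: the finitely many distinct max-$tb$ representatives with equal classical invariants become Legendrian isotopic after a single stabilization. So every non-maximal representative is determined by $(tb,rot)$. With this, Corollary~\ref{Thm:anyknot1} applies verbatim and gives (1) and (2). The only care needed is to match the paper's convention that rotation numbers are taken positive (unoriented knots) with the oriented classification. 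Taking absolute values does this, since reversing orientation negates $rot$.

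\textbf{Part (3).} This is where I expect the main obstacle, since Corollary~\ref{Thm:anyknot1} no longer applies. The smooth slope is $-1+tb(L)$. Using $\overline{tb}(\overline{5_2})=-8$, this is $-9$, which is not a negative integer only if the conventions work out. So the first step is to check that $-1+tb(L)$ lies in the characterizing range, for example by recomputing $\overline{tb}$ with the correct mirror convention. I am not sure this check goes through as stated. Granting it, Theorem~\ref{Thm:anyknot} reduces the problem to the following: $L'$ is a max-$tb$ representative of $\overline{5_2}$ with the same $(tb,rot)$ as $L$, and we must show that $L(-1)\cong L'(-1)$ forces $L'=L$.

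My plan for this step is to distinguish the Stein fillable contact manifolds $L(-1)$ for the distinct max-$tb$ representatives using the Ozsv\'ath--Szab\'o contact invariant. Under the Legendrian surgery cobordism, this invariant is related to the LOSS invariant of $L$. As I recall, these representatives are distinguished by LOSS-type or Legendrian contact homology invariants. The point to verify is that a contactomorphism $L(-1)\to L'(-1)$, combined with uniqueness of the relevant data on the small surgered manifold, would force the invariants of $L$ and $L'$ to agree. If this route stalls, I would instead try to show directly that the Stein traces of the candidates have non-isomorphic intersection data or $c_1$-data compatible with all Stein fillings of the surgered manifold.
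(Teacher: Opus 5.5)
\textbf{Parts (1) and (2)} follow the paper's argument exactly: Legendrian simplicity of $5_2$, simplicity of $\overline{5_2}$ away from maximal $tb$ \cite{EtNgVe13}, the mirrored Baldwin--Sivek range, and Corollary~\ref{Thm:anyknot1}.

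\textbf{Part (3) is not proved.} There are two problems.

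First, your worry about the smooth slope comes from swapping the mirrors. In the paper's convention $5_2$ has $\overline{tb}=-8$. The knot $\overline{5_2}$ is Chekanov's knot, the non-simple one, which your own argument for (2) already presupposes, and it has $\overline{tb}=1$. So $-1+tb(L)=0$. This is not a negative integer, hence it is characterizing for $\overline{5_2}$. Theorem~\ref{Thm:anyknot} then gives that $L'$ is a $tb=1$ representative of $\overline{5_2}$ with the same rotation number. The bound $tb+|rot|\le 2g-1=1$ forces $rot=0$, so $L'$ is either $L$ or the other Chekanov representative $\tilde L$. This part is an easy fix.

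Second, and more seriously, you never show that the Legendrian surgeries $L(-1)$ and $\tilde L(-1)$ are not contactomorphic. This step needs a genuinely non-classical invariant, and neither of your routes supplies one.

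\begin{itemize}
\item Your fallback via Stein traces cannot work. Both traces are the smooth $0$-trace of $\overline{5_2}$, and $c_1$ evaluates to the rotation number $0$ in both cases. So all intersection and Chern class data agree.
\item The Heegaard Floer route is only a hope. Both contact classes are nonzero, since both structures are Stein fillable. They lie in the same $\mathrm{Spin}^c$ structure and the same grading. To separate them up to contactomorphism you would need to control how all self-diffeomorphisms of $S^3_0(\overline{5_2})$ act on $\widehat{HF}$. You give no argument for this, and no reference that LOSS-type invariants distinguish this pair.
\end{itemize}

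The paper closes this step by citing Bourgeois--Ekholm--Eliashberg \cite[Section 7.3]{BEE12}. Their surgery formula relates holomorphic-curve invariants of the surgered manifold to the Legendrian contact homology (Chekanov--Eliashberg DGA) of the attaching knot, and this distinguishes $L(-1)$ from $\tilde L(-1)$. Without that input, or an equivalent one, part (3) remains open in your proposal.
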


\begin{corollary}\label{Thm:T52} We determine partial characterizing slopes for Legendrian representatives of  $T_{5,-2}$ and $T_{5,2}$.\\
(1) Let $L\subset (S^3,\xi_{st})$ be a Legendrian representative of $T_{5,-2}$. If $\frac{p}{q}+tb(L)<1$ and $\frac{p}{q}+tb(L)\notin\{0,-1,\pm\frac{1}{2}, \pm\frac{1}{3}\}$, then $\frac{p}{q}$ is a characterizing slope for $L$. \\ 
(2) Let $L\subset (S^3,\xi_{st})$ be a Legendrian representative of $T_{5,2}$. If $\frac{p}{q}+tb(L)>-1$ and $\frac{p}{q}+tb(L)\notin\{0,1,\pm\frac{1}{2}, \pm\frac{1}{3}\}$, then $\frac{p}{q}$ is a characterizing slope for $L$. 
\end{corollary}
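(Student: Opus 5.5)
The plan is to deduce this from Corollary~\ref{Thm:anyknot1}. Two inputs are needed: that Legendrian representatives of $T_{5,\pm2}$ are determined up to Legendrian isotopy by $tb$ and $rot$, and a list of smooth characterizing slopes for these knots. The nonzero hypothesis on the contact slope $\frac{p}{q}$ is built into the definition of contact surgery, so Corollary~\ref{Thm:anyknot1} applies directly once both inputs are in place.

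\textbf{Step 1 (Legendrian simplicity).} I would invoke Etnyre--Honda's classification of Legendrian torus knots in $(S^3,\xi_{st})$. It shows that every torus knot, and in particular $T_{5,2}$ and its mirror $T_{5,-2}$, is Legendrian simple: Legendrian representatives with the same Thurston--Bennequin invariant and rotation number are Legendrian isotopic. Since the paper works with unoriented knots and normalizes the rotation number to be nonnegative, no orientation issue arises. This verifies the uniqueness hypothesis of Corollary~\ref{Thm:anyknot1}.

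\textbf{Step 2 (smooth slopes for $T_{5,2}$).} I would quote Ni--Zhang's refined result for the cinquefoil \cite{NiZh14, NiZh23}: every slope $r>-1$ with $r\notin\{0,1,\pm\frac12,\pm\frac13\}$ is a characterizing slope for $T_{5,2}$. I expect this to be the main obstacle, not because of any new argument but because the exact form of their theorem must be checked. This means pinning down precisely which slopes they exclude (nontrivial slopes only, the listed exceptional ones), and confirming that their region matches $r>-1$. If their statement is phrased via $p$ and $q$ with extra conditions, I would translate it into exactly the stated set.

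\textbf{Step 3 (mirror).} For $T_{5,-2}=\overline{T_{5,2}}$, I use that $S^3_r(\overline{K})\cong -S^3_{-r}(K)$. Suppose $S^3_r(K')\cong S^3_r(T_{5,-2})$ by an orientation-preserving homeomorphism. Then $S^3_{-r}(\overline{K'})\cong S^3_{-r}(T_{5,2})$, again orientation-preservingly. Hence $r$ is characterizing for $T_{5,-2}$ if and only if $-r$ is characterizing for $T_{5,2}$. This converts the condition of Step 2 into $r<1$ with $r\notin\{0,-1,\pm\frac12,\pm\frac13\}$.

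\textbf{Step 4 (conclusion).} Apply Corollary~\ref{Thm:anyknot1} with $r=\frac{p}{q}+tb(L)$ in each case. For $T_{5,2}$ the hypothesis $r>-1$ with the exclusions gives part (2). For $T_{5,-2}$ the hypothesis $r<1$ with the exclusions gives part (1).
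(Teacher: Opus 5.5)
Your proposal is correct and matches the paper's proof: Legendrian simplicity of $T_{5,\pm2}$ from Etnyre--Honda, the Ni--Zhang cinquefoil result (Theorem~\ref{Thm:surcharT52}) transferred to $T_{5,-2}$ by mirroring, and then Corollary~\ref{Thm:anyknot1}. Your Step 3 spells out the mirror argument via $S^3_r(\overline{K})\cong -S^3_{-r}(K)$, which the paper states without justification.
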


We give two examples to show that not every non-zero rational is characterizing for a general Legendrian torus knot or $\overline{5_2}$.

\begin{example}
For a general Legendrian knot $\overline{5_2}$, not every non-zero integer is a characterizing slope. By \cite[Proposition 1.2]{BaSi24}, $S^{3}_{-1}(\overline{5_2})$ is orientation-preserving homeomorphic to $S^{3}_{-1}(P(3,-3,-8))$. Let $L_1$ be a Legendrian knot $\overline{5_2}$ with Thurston-Bennequin invariant $1$ and rotation number $0$, and $L_2$ be a Legendrian pretzel knot $P(3,-3,-8)$ with Thurston-Bennequin invariant $-3$ and rotation number $2$. See Figure~\ref{fig:noncharacterizing slope}. Then the contact $(+3)$-surgeries along $S_{+}^{4}S_{-}(L_1)$ and along $S_{+}(L_2)$ both yield overtwisted contact 3-manifolds \cite[Theorem 1.1]{Co19}, where $S_{+}$ and $S_{-}$ denote the positive and negative stabilizations, respectively. We have $tb(S_{+}^{4}S_{-}(L_1))=tb(S_{+}(L_2))=-4$ and $rot(S_{+}^{4}S_{-}(L_1))=rot(S_{+}(L_2))=3$. Since the resulted two groups of overtwisted contact 3-manifolds are diffeomorphic and have the same $d_3$-invariants, they are contactomorphic. So $+3$ is not a characterzing slope of either $S_{+}^{4}S_{-}(L_1)$ or $S_{+}(L_2)$.
\end{example}

\begin{figure}[htb]
\begin{overpic}
[scale=0.5]
{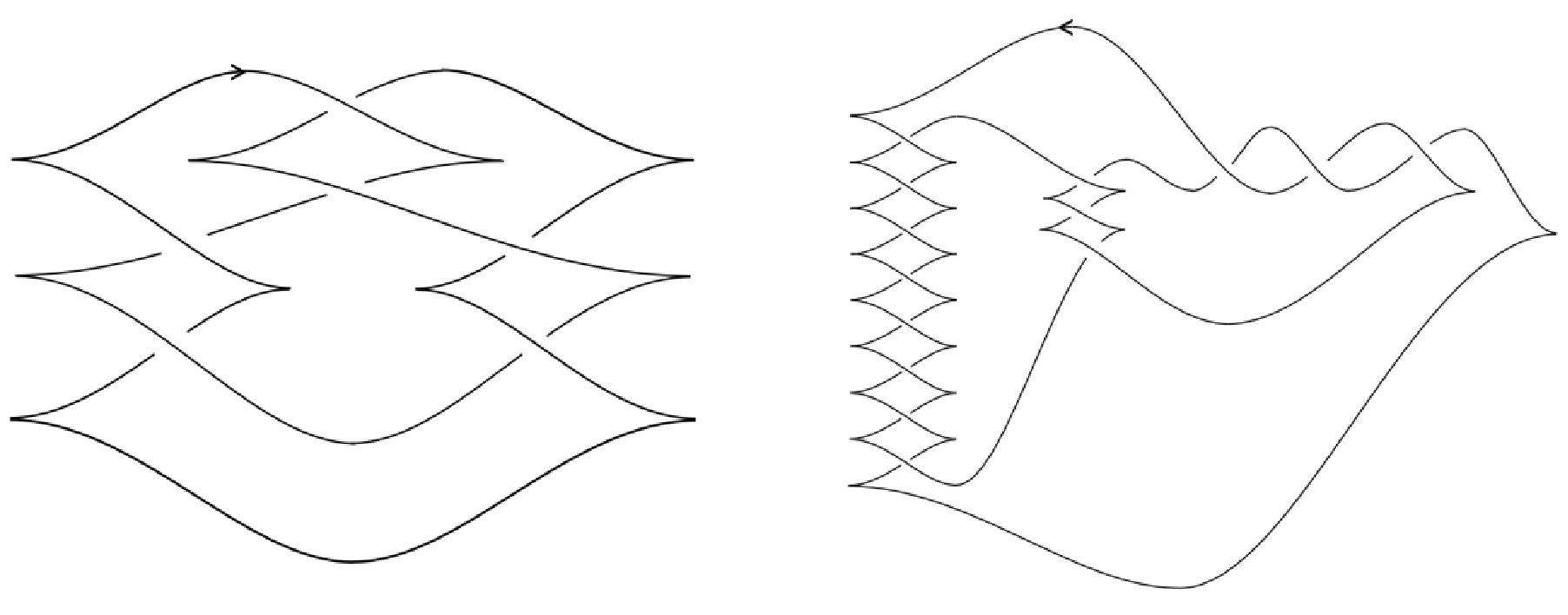} 
\put(30, 20){$L_1$}
\put(370, 20){$L_2$}
\end{overpic}
\caption{Here $L_1$ is a Legendrian $\overline{5_2}$ with $tb(L_1)=1$ and $rot(L_1)=0$, $L_2$ is a Legendrian $P(3,-3,-8)$ with $tb(L_2)=-3$ and $rot(L_2)=2$.}
\label{fig:noncharacterizing slope}
\end{figure}

\begin{example}
For a general Legendrian torus knot, not every non-zero rational number is a characterizing slope. For example, Let $L_1$ be a Legendrian torus knot $T_{-5,4}$ with Thurston-Bennequin invariant $-20$ and rotation number $1$, and $L_2$ be a Legendrian torus knot $T_{-11,2}$ with Thurston-Bennequin invariant $-22$ and rotation number $1$. Then the contact $(+1)$-surgeries along $S_{+}S_{-}(L_1)$ and along $L_2$ both yield overtwisted contact 3-manifolds \cite[Corollary 1.2]{LiSt06}. Since these two resulted contact 3-manifolds are diffeomorphic \cite{NiZh14} and have the same $d_3$-invariants, they are contactomorphic. So $+1$ is not a characterzing slope of either $S_{+}S_{-}(L_1)$ or $L_2$.
\end{example}

\subsection*{Acknowledgments}
We thank Zhongtao Wu for stimulating conversations. We also thank Marc Kegel for useful comments on an earlier draft of the paper. Both authors were partially supported by the National Natural Science Foundation of China under Grant No. 12271349. 
\bigskip

\section{Preliminaries}
\subsection{Topological surgery.}
For a knot $K$ in $S^3$ and $r\in\mathbb{Q}$, we denote the closed 3-manifold obtained by topological $r$-surgery on $S^3$ along a knot $K$ by $S^{3}_{r}(K)$. Let $U$ be the unknot. 
The lens space obtained by $\frac{p}{q}$-surgery along the unknot $U$, $S^{3}_{\frac{p}{q}}(U)$, is denoted by $L(p,-q)$. At first, we recall some important facts about smooth characterizing slopes.


\begin{theorem}\label{Thm:surcharunknot} \cite{KMOS} Assume $U$ is an unknot, and $K$ is any knot in $S^3$. If there exists a rational number $r$, such that there exists an orientation-preserving diffeomorphism $S_r^3(K)\cong S_r^3(U)$, then $K$ is isotopic to $U$.
\end{theorem}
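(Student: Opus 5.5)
The plan is to detect the unknot through Heegaard Floer correction terms, after disposing of slope $0$ separately and reducing to positive slopes by mirroring.

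\emph{Slope $0$.} Here $S^3_0(U)\cong S^1\times S^2$. Gabai's Property R theorem says that if $S^3_0(K)\cong S^1\times S^2$, then $K$ is the unknot.

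\emph{Reduction to positive slopes.} There is an orientation-preserving identification
\[
S^3_{-p/q}(K)\cong -S^3_{p/q}(\overline{K}),
\]
where $\overline{K}$ is the mirror of $K$, and $\overline{U}=U$. So an orientation-preserving diffeomorphism $S^3_{-p/q}(K)\cong S^3_{-p/q}(U)$ yields one between $S^3_{p/q}(\overline{K})$ and $S^3_{p/q}(U)$. Since $K$ is the unknot if and only if $\overline{K}$ is, it suffices to treat $r=p/q>0$.

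\emph{$K$ is an L-space knot.} Assume now $r=p/q>0$ and $S^3_r(K)\cong L(p,-q)$. Lens spaces are L-spaces, so $S^3_r(K)$ is a positive-slope L-space surgery. By Ozsváth--Szabó's rational surgery formula, $K$ is then an L-space knot. Consequently $\widehat{HFK}(K)$ is determined by the Alexander polynomial, $K$ is fibered, and $\nu^+(K)=g_4(K)=g(K)$.

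\emph{Comparing correction terms.} The rational surgery formula for $d$-invariants gives, for $0\le i<p$,
\[
d\big(S^3_{p/q}(K),i\big)=d\big(L(p,q),i\big)-2\max\big(V_{\lfloor i/q\rfloor},\,V_{\lfloor (p+q-1-i)/q\rfloor}\big),
\]
up to the standard labeling conventions. Here the $V_j\ge 0$ are nonincreasing in $j$, and $V_j=0$ exactly when $j\ge \nu^+(K)$.

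The homeomorphism need not respect the labelings of spin$^c$ structures; it only permutes them, possibly composed with conjugation. To sidestep this, compare the sums of all correction terms, which are invariant under any such relabeling. Equality of these sums forces
\[
\sum_i \max\big(V_{\lfloor i/q\rfloor},V_{\lfloor (p+q-1-i)/q\rfloor}\big)=0 .
\]
In particular $V_0=0$, so $\nu^+(K)=0$. Since $K$ is an L-space knot, $g(K)=\nu^+(K)=0$, and therefore $K$ is the unknot.

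\emph{Main obstacle.} The delicate step is the input that an L-space surgery at a positive rational slope forces $K$ to be an L-space knot with $\nu^+=g$. This needs the full rational surgery mapping cone rather than just the integer exact triangle. The original KMOS argument achieves the same conclusion through monopole Floer homology and Seiberg--Witten invariants of the surgery cobordism, and I would fall back on that route if the Heegaard Floer route proved technically awkward. The spin$^c$ relabeling issue, by contrast, is handled cheaply by the summation trick above.
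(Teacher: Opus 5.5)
Your argument is correct, but it does not follow the source the paper relies on. The paper gives no proof of this statement and simply cites \cite{KMOS}, where the result is obtained in monopole Floer homology, via a surgery exact triangle and a non-vanishing theorem for the Floer homology of $S^3_0(K)$ when $K$ is nontrivial (resting on Gabai's taut foliations).

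You instead stay entirely within Heegaard Floer homology:
\begin{itemize}
\item Property R disposes of slope $0$, and mirroring reduces to $r=p/q>0$.
\item The Ozsv\'ath--Szab\'o rational mapping cone shows that an L-space surgery at positive slope forces $K$ to be an L-space knot, so $\nu^+(K)=g(K)$.
\item The Ni--Wu formula, which is exactly Proposition~\ref{Prop:dinvformula} of the paper, summed over all spin$^c$ structures gives $V_0(K)=0$.
\end{itemize}

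Each step holds up. The total of the correction terms is unchanged by whatever bijection of spin$^c$ structures the diffeomorphism induces; this is the same summation device the paper uses with $D_3$ in the proof of Theorem~\ref{Thm:anyknot}. Every $\max$-term is non-negative, so all of them must vanish. For an L-space knot of genus $g\ge 1$ one has $V_{g-1}=1$, so $V_0=0$ forces $g=0$. You are also right that the L-space-knot input is indispensable, since $V_0=0$ holds for every slice knot.

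The trade-off is this. Your proof is short and uses only tools the paper already invokes, but it rests on the full rational surgery formula and the Ni--Wu computation, which postdate \cite{KMOS}. The monopole argument was the original one and did not need them.

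One notational slip: with the paper's convention $S^3_{p/q}(U)=L(p,-q)$, the reference term in your displayed formula should be $d(S^3_{p/q}(U),i)$ rather than $d(L(p,q),i)$. That is the term the cancellation actually uses, so nothing breaks.
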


\begin{theorem}\label{Thm:surchartrefoil} \cite{OSZ19}
Let $K_0$ be a trefoil knot or a figure eight knot. If $K$ is a knot with the property that there is a rational number $r$ and an orientation-preserving diffeomorphism
$S^3_r(K)\cong S^3_{r}(K_0)$, then $K$ is isotopic to $K_0$.
\end{theorem}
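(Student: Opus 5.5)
The plan is to follow the Heegaard Floer strategy of Ozsv\'ath--Szab\'o and reduce the statement to a classical fact: the only genus one fibered knots in $S^3$ are the two trefoils and the figure eight knot. Write $r=p/q$ in lowest terms with $q>0$. The proof splits into the case $p=0$ and the case $p\neq 0$.

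\textbf{The case $p=0$.} Here Gabai's theorem does the work. The Thurston norm of a generator of $H_2(S^3_0(K))$ equals $g(K)$, and $S^3_0(K)$ fibers over $S^1$ if and only if $K$ is fibered. Both properties are diffeomorphism invariants of $S^3_0(K)$. So a diffeomorphism $S^3_0(K)\cong S^3_0(K_0)$ forces $K$ to be a genus one fibered knot. For the case $p\neq 0$, these two conclusions will instead be extracted from Heegaard Floer homology.

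\textbf{The case $p\neq 0$.} Step one is to express $HF^+(S^3_{p/q}(K))$ through the rational surgery mapping cone. Its input is the collection of complexes $A^+_s(K)$ and the integers $V_s(K)$. The correction terms $d(S^3_{p/q}(K),\mathfrak t)$, $\mathfrak t\in \mathrm{Spin}^c$, equal the corresponding lens space correction terms shifted by $-2\max(V_{\lfloor i/q\rfloor},V_{\lfloor (p+q-1-i)/q\rfloor})$. Comparing these with the same quantities for $K_0$ determines every $V_s(K)$. For instance, for the right-handed trefoil $V_0=1$ and $V_s=0$ for $s\ge 1$; for the figure eight knot and the left-handed trefoil all $V_s$ vanish.

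Step two is to compare the total rank of $HF_{\mathrm{red}}$ summed over $\mathrm{Spin}^c$ structures, using the mapping cone, which then pins down the reduced parts $H_*(A^+_s)_{\mathrm{red}}$. Since $K_0$ has genus one and the mapping cone sees contributions from $|s|<g(K)$, I expect equal ranks to force $g(K)\le 1$. This step will require the rank inequality relating $\widehat{HFK}(K,g)$ to $A_{g-1}$. Because $S^3_r(K)\not\cong S^3_r(U)$ (by Theorem~\ref{Thm:surcharunknot}), we get $g(K)=1$ exactly. Next, the group $H_*(A^+_0)$ in the relevant $\mathrm{Spin}^c$ structure should have the same rank as for $K_0$. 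Combined with the genus one structure, this gives $\operatorname{rank}\widehat{HFK}(K,1)=1$, and Ni's theorem then shows that $K$ is fibered.

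\textbf{Conclusion in both cases.} $K$ is a genus one fibered knot, hence a trefoil or the figure eight knot. These three are distinguished by the $V_0$ data together with the sign of $r$, through the correction terms. Alternatively, one can compare $S^3_r$ of each candidate directly, using $S^3_r(\overline K)=-S^3_{-r}(K)$ and the fact that the diffeomorphism preserves orientation. Either way this yields $K\simeq K_0$.

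The main obstacle is step two: extracting genus and fiberedness from $HF^+(S^3_{p/q}(K))$ for an arbitrary nonzero slope. The difficulty is that the mapping cone mixes the $A_s$ across many $\mathrm{Spin}^c$ structures. I would first try to treat integral surgery with $|p|$ large relative to $g$, where the $A_s$ decouple. I would then reduce the general case by a careful counting argument on the rank of $HF_{\mathrm{red}}$ as a function of $p$, $q$ and $\{H_*(A_s)\}$.
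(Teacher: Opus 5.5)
The paper gives no proof of this statement; it quotes it from \cite{OSZ19}. Your outline has the same overall shape as the argument there: rational surgery formula, then genus one with $\widehat{HFK}(K,1)$ of rank one, then fiberedness, then the classification of genus-one fibered knots. Your $r=0$ case via Gabai is fine, with one small correction: Gabai gives Thurston norm $2g(K)-2$, so you also need $S^3_0(K_0)\not\cong S^1\times S^2$.

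For $r\neq 0$, however, there is a genuine gap exactly where the work lies. Step two is only stated as an expectation, and the mechanism you propose (equal total ranks force $g(K)\le 1$) cannot work on its own. Reduced parts of $H_*(A^+_s)$ can only occur for $|s|<g$, but they need not occur at all: an L-space knot of any genus has $\operatorname{rank} H_*(\hat A_s)=1$ for every $s$. For instance, for $K_0=T_{2,3}$ and $p/q\ge 1$, rank comparison only tells you that $K$ is an L-space knot with $2\nu(K)-1\le p/q$.

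The missing ingredient is the short exact sequence
\[
0\to \widehat{HFK}(K,g)\to H_*(A^+_{g-1})\to H_*(A^+_g)\cong\mathcal{T}^+\to 0 .
\]
The connecting map vanishes because $U$ is onto on $\mathcal{T}^+$ and zero on $\widehat{HFK}$. The sequence gives $\dim\widehat{HFK}(K,g)=\dim H_*(A^+_{g-1})_{\mathrm{red}}+V_{g-1}-V_g$, and you must combine this with the values of $V_s$ read off from the correction terms. Here is how that works when $K_0\in\{T_{2,-3},4_1\}$ and $r>0$:
\begin{itemize}
\item Summing $d$-invariants over all $\mathrm{Spin}^c$ structures gives $V_0(K)=0$, hence $V_s(K)=0$ for $s\ge 0$. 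This summation also handles the unknown $\mathrm{Spin}^c$ bijection that your Step one ignores.
\item If $g\ge 2$, the formula forces nonzero reduced parts at both $s=\pm(g-1)$, whereas for $K_0$ only $s=0$ contributes. This contradicts the rank count.
\item When $g=1$, the same formula shows you need the $\mathbb{F}$-dimension of $H_*(A^+_0)_{\mathrm{red}}$ and the value of $V_0-V_1$, not merely $\operatorname{rank} H_*(\hat A_0)$, to conclude $\operatorname{rank}\widehat{HFK}(K,1)=1$.
\end{itemize}
The case $K_0=T_{2,3}$, where $V_0=1$, needs its own version of this bookkeeping. Step one's claim that every $V_s(K)$ is determined is also too strong: only $V_s$ with $s$ up to roughly $p/2q$ enter the formula.

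Second, your final identification step is false as written. For $r>0$, both $T_{2,-3}$ and $4_1$ have $V_s=0$ for all $s\ge 0$. So $S^3_r(T_{2,-3})$ and $S^3_r(4_1)$ have identical correction terms; for $r=1$ both are homology spheres with $d=0$. For $r<0$ the same happens for $T_{2,3}$ and $4_1$, since their mirrors have vanishing $V_s$. Your ``alternatively'' merely restates what has to be shown. You need an orientation-sensitive invariant beyond $d$. One choice is the Casson--Walker invariant, $\lambda(S^3_{p/q}(K))=\lambda(S^3_{p/q}(U))+\frac{q}{2p}\Delta_K''(1)$, which separates the trefoils ($\Delta_K''(1)=2$) from $4_1$ ($\Delta_K''(1)=-2$). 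After that, $V_0(K)$ for $r>0$, or $V_0(\overline{K})$ for $r<0$, separates the two trefoils. Tracking the absolute grading of $HF_{\mathrm{red}}$ would also work.
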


\begin{theorem}\label{Thm:surcharT52} \cite{NiZh23}
Suppose $r>-1$ is a rational number, and $r\notin\{0, 1, \pm\frac{1}{2}, \pm\frac{1}{3}\}$. If $K$ is a knot with the property that there is an orientation-preserving diffeomorphism
$S^3_r(K)\cong S^3_{r}(T_{5,2})$, then $K$ is isotopic to $T_{5,2}$.
\end{theorem}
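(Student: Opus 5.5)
The plan is to first use Heegaard Floer homology to recover the knot Floer data of $K$, and then to use the geometry of $S^3_r(T_{5,2})$ to rule out every knot other than $T_{5,2}$ that shares those invariants.

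\emph{Floer-theoretic step.} Write $r=\frac{p}{q}$ with $q>0$. An orientation-preserving diffeomorphism $S^3_r(K)\cong S^3_r(T_{5,2})$ preserves $H_1$, the $\mathrm{Spin}^c$ structures up to conjugation, the $d$-invariants, and the graded groups $HF^+$. I will use the rational surgery formula of Ozsv\'ath--Szab\'o, or the mapping cone, to express these in terms of the invariants $V_i(K)$.
\begin{itemize}
\item For $r>0$: comparing $d$-invariants forces $V_i(K)=V_i(T_{5,2})$ for all $i$, so $V_0=V_1=1$ and $V_i=0$ for $i\ge 2$. Since $T_{5,2}$ is an L-space knot and $r\ge 2g(T_{5,2})-1=3$ gives an L-space, the surgery $S^3_r(K)$ is an L-space whenever $r\geq 3$.
\begin{itemize}
\item In the range $r\ge 3$, $K$ is an L-space knot. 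Its Alexander polynomial is then determined by the $V_i$, so $\Delta_K=\Delta_{T_{5,2}}$ and $g(K)=2$.
\item In the range $0<r<3$, I would compare the ranks of $HF^+$ in each $\mathrm{Spin}^c$ structure, using the genus-detection results of Ni--Wu and Gainullin. These show that $\widehat{HFK}(K)$ has the same total rank and genus as that of $T_{5,2}$, hence $K$ is fibered of genus $2$ with the same $\widehat{HFK}$.
\end{itemize}
\item For $-1<r<0$: I will use the $d$-invariant and Casson--Walker comparisons together with $HF_{\mathrm{red}}$ of the negative surgery. These should again give $g(K)=2$ and that $K$ is fibered with the same knot Floer homology.
\item The excluded slopes $0,1,\pm\frac12,\pm\frac13$ are exactly where this bookkeeping fails to separate $K$ from other genus-$2$ candidates. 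I expect this by analogy with the trefoil and figure-eight arguments of Ozsv\'ath--Szab\'o in Theorem~\ref{Thm:surchartrefoil}.
\end{itemize}

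\emph{Geometric step.} By the above, $K$ is a genus-$2$ fibered knot with $\widehat{HFK}(K)\cong\widehat{HFK}(T_{5,2})$. If $K$ is an L-space knot, the classification of genus-$2$ L-space knots gives $K=T_{5,2}$ directly. Otherwise I split into cases according to the Thurston trichotomy for $K$:
\begin{itemize}
\item \emph{$K$ a torus knot.} Genus $2$ and the Alexander polynomial leave only $T_{5,2}$ (or $T_{5,-2}$, which is excluded by the sign of the $d$-invariants / Casson--Walker invariant).
\item \emph{$K$ a satellite.} The genus-$2$ bound and fiberedness force a cable of a genus-one fibered knot or a braided pattern. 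I will rule these out by comparing the Seifert fibered structure of $S^3_r(T_{5,2})$ (small Seifert fibered, or a lens space or reducible manifold at special slopes) with the JSJ structure of surgeries on satellites, using Gabai's results and the cabling formula.
\item \emph{$K$ hyperbolic.} Then $r$ is an exceptional slope yielding a small Seifert fibered space. The Lackenby--Meyerhoff bound on exceptional slopes, combined with the genus-$2$ constraint ($|p|\le$ a small constant and $q$ small), reduces this to finitely many slopes.
\end{itemize}

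The main obstacle is this last hyperbolic case. My first approach would be the $6$-theorem combined with the known list of genus-$2$ hyperbolic knots with Seifert surgeries, or a Casson--Walker and $\widehat{HF}$ torsion comparison at each remaining slope. The residual slopes for which this comparison genuinely fails are presumably exactly those excluded in the statement.
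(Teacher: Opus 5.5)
The paper does not prove this statement. It is quoted from Ni--Zhang \cite{NiZh23} and used only as a black box in the proof of Corollary~\ref{Thm:T52}, so there is no internal argument to compare with. Judged on its own, your proposal has genuine gaps: its decisive steps are asserted rather than proved.

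\textbf{The Floer-theoretic step only works for $r\ge 3$.} In that range the argument is sound.
\begin{enumerate}
\item $S^3_r(T_{5,2})$ is an L-space, so $K$ is an L-space knot with $2g(K)-1\le r$.
\item The $d$-invariants then see every $V_j$ with $j<g(K)$, so $\Delta_K=\Delta_{T_{5,2}}$.
\item The Baldwin--Hu--Sivek / Farber--Reinoso--Wang theorem that $T_{2,5}$ is the only genus-two L-space knot finishes.
\end{enumerate}
You still owe an argument matching $\mathrm{Spin}^c$ structures. The diffeomorphism acts on $H_1\cong\mathbb{Z}/p$ by some unit $a$ with $a^2\equiv 1\pmod p$, not necessarily $\pm1$; this is why the paper carries an unknown permutation $\sigma$ in Propositions~\ref{Prop:surcharanyknot} and~\ref{Prop:sucharanyknot2}.

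The real content of the theorem is the non-L-space range $-1<r<3$, and there your claims fail. In Proposition~\ref{Prop:dinvformula}, the term $\max\{V_{\lfloor i/q\rfloor},V_{\lfloor (p+q-1-i)/q\rfloor}\}$ equals $V_j$ for some $j\le\frac{p+q-1}{2q}$. Consequently:
\begin{itemize}
\item for $0<r<1$ the $d$-invariants see only $V_0$;
\item for $1\le r<3$ they see only $V_0$ and $V_1$;
\item for $-1<r<0$ they see only $V_0(\overline{K})$, which is $0$ for $\overline{T_{5,2}}$.
\end{itemize}
So ``comparing $d$-invariants forces $V_i(K)=V_i(T_{5,2})$ for all $i$'' is false. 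In any case, for a knot not known to be an L-space knot, the $V_i$ do not determine genus, fiberedness or $\widehat{HFK}$.

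Nothing you cite recovers the genus, let alone the total rank of $\widehat{HFK}(K)$, from $HF^+$ of a single non-L-space surgery. The mapping cone only involves $H_*(A^+_s)$ and the maps $v_s,h_s$, and what one extracts from it are inequalities such as lower genus bounds. Even ``same genus and total rank'' would give neither fiberedness (which needs $\mathrm{rank}\,\widehat{HFK}(K,g)=1$) nor an isomorphism of bigraded groups.

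The negative range is covered only by ``should''. The hypothesis $r\notin\{0,1,\pm\frac12,\pm\frac13\}$ is never used in any step, only in the expectation that the bookkeeping fails there. That is a sure sign the actual mechanism is missing.

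\textbf{The geometric step is either redundant or insufficient.} If you really had $\widehat{HFK}(K)\cong\widehat{HFK}(T_{5,2})$, Farber--Reinoso--Wang's theorem that knot Floer homology detects $T_{2,5}$ would end the proof immediately, with no trichotomy needed. Without that isomorphism, the trichotomy does not close.
\begin{itemize}
\item The satellite case is only a promise (``I will rule these out'').
\item In the hyperbolic case, $r$ is exceptional, so $|q|\le 8$ by Lackenby--Meyerhoff. $|p|$ is bounded via the $6$-theorem and the genus bound on the longitude length, but this uses $g(K)=2$, one of the unproven claims. What remains is a finite but sizable set of slopes in $(-1,3)$.
\item Your plan for those slopes rests on a ``known list of genus-$2$ hyperbolic knots with Seifert surgeries'' that does not exist, and ends with ``presumably exactly those excluded'', which is a hope, not an argument.
\end{itemize}
What is missing, concretely, is a genuine argument that for each $r>-1$ outside $\{0,1,\pm\frac12,\pm\frac13\}$, the Heegaard Floer homology of $S^3_r(K)$ forces $K$ to have the knot Floer homology of $T_{5,2}$. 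It would suffice to force enough of it to invoke a detection theorem. The argument must also make visible exactly where the hypotheses $r>-1$ and $r\notin\{0,1,\pm\frac12,\pm\frac13\}$ are used.
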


\begin{theorem}\label{Thm:surchar52} \cite{BaSi24}
Suppose $r$ is a rational number other than a positive integer. If $K$ is a knot with the property that there is an orientation-preserving diffeomorphism
$S^3_r(K)\cong S^3_{r}(5_2)$, then $K$ is isotopic to $5_2$.
\end{theorem}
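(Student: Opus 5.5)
Since Theorem~\ref{Thm:surchar52} is quoted from \cite{BaSi24}, we only sketch how one would prove it. The plan has two stages. First, extract as much Floer-theoretic information about $K$ as possible from the homeomorphism $S^3_r(K)\cong S^3_r(5_2)$. Second, use that information to pin down $K$ among genus one knots.

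Recall that $5_2$ is a genus one, non-fibered, positive twist knot with $\Delta_{5_2}(t)=2t-3+2t^{-1}$, $\tau=1$, and $V_0(5_2)=1$.

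\textbf{Step 1: Floer data of $K$.} We would apply the Ozsváth--Szabó rational surgery mapping cone formula for $HF^+$ to both sides.
\begin{enumerate}[(i)]
\item Comparing $d$-invariants in all $\mathrm{Spin}^c$ structures should give $V_i(K)=V_i(5_2)$ for all $i$; in particular $\nu^+(K)=1$.
\item Comparing $\operatorname{rank}\widehat{HF}$ and the reduced groups $HF_{\mathrm{red}}$ should force $g(K)=1$, via the Ni--Wu style genus detection for surgeries with $|p/q|$ not too large. It should also give $\Delta_K=\Delta_{5_2}$, so that $\widehat{HFK}(K)\cong\widehat{HFK}(5_2)$. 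In particular $\widehat{HFK}(K,1)$ has rank $2$, so $K$ is not fibered.
\end{enumerate}
The hypothesis that $r$ is not a positive integer is exactly what lets the mapping cone be inverted. For positive integral $r$ there are genuine coincidences, such as the analogue of the $\overline{5_2}$ versus $P(3,-3,-8)$ example in the introduction, so the hypothesis should enter precisely here.

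\textbf{Step 2: reduction to genus one knots with sutured Floer rank $2$.} Let $S$ be a genus one Seifert surface for $K$ and consider the sutured complement $S^3(S)$. Since $\widehat{HFK}(K,1)\cong SFH(S^3(S))$ has rank $2$, we would invoke a classification of taut, non-product, genus one sutured manifolds of $SFH$-rank $2$ (via Juhász's decomposition theory, or its instanton analogue). The expected conclusion is that $S$ is a plumbing of two twisted annuli. Hence $K$ lies in an explicit two-parameter family of genus one ``double twist'' knots. Matching $\Delta_K$ then cuts this family down to finitely many candidates, namely $5_2$ and possibly its mirror, the latter being excluded by $\tau=1$, together with a few exceptional members.

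\textbf{Step 3: excluding the exceptional candidates.} The remaining candidates would be ruled out by comparing finer invariants of $S^3_r$, such as hyperbolic volume, the Casson--Walker invariant, or the full $\mathbb{F}[U]$-module structure of $HF^+$.

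I expect Step 2 to be the main obstacle. Knot Floer homology alone does not classify genus one knots, so one needs a genuinely topological classification of minimal Seifert surface complements with small sutured Floer homology. I would try to prove it first using sutured instanton homology, where rank bounds interact well with product-annulus decompositions.
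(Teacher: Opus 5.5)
The paper does not prove this statement; it is quoted from \cite{BaSi24}, so your sketch has to be judged against what a proof actually needs. Your three-stage shape is the right one, and it is the shape of the argument in \cite{BaSi24}: force $K$ to be genus one with $\dim\widehat{HFK}(K,1)=2$, classify such knots, then eliminate the survivors. However, the classification you expect in Step 2 is false.

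Baldwin and Sivek showed (in \emph{Floer homology and non-fibered knot detection}) that the genus one knots with $\dim\widehat{HFK}(K,1)=2$ are, up to mirroring, $5_2$, $15n_{43522}$, and the whole infinite family of pretzel knots $P(-3,3,2n+1)$, $n\in\mathbb{Z}$. For $|2n+1|\geq 3$ these are Montesinos knots with three non-integral rational tangles, so they are not two-bridge. Hence no Seifert surface of theirs is a plumbing of two unknotted twisted annuli, and an argument aimed at your ``expected conclusion'' cannot succeed. This classification is a substantial separate theorem and is the real heart of the proof.

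With the correct list the endgame is short. Write $\Delta_K=a_1t+(1-2a_1)+a_1t^{-1}$ for a genus one knot. Then $\Delta_K''(1)=2a_1$ is read off from the Casson--Walker invariant of $S^3_r(K)$. This removes the pretzel family, whose Alexander polynomial is $-2t+5-2t^{-1}$. The mirror is removed by the $V_i$ through the Ni--Wu $d$-invariant formula, and $15n_{43522}$ and its mirror must also be excluded.

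Step 1 also contains an invalid inference: $\Delta_K=\Delta_{5_2}$ does not give $\widehat{HFK}(K)\cong\widehat{HFK}(5_2)$. Even granting $g(K)=1$, it only yields $\dim\widehat{HFK}(K,1)\geq 2$. Equality has to come from an independent rank count, for instance comparing $\dim\widehat{HF}(S^3_r(K))$ via the rational surgery formula once $g(K)=1$ and $\nu$ are known. The genus-one conclusion itself is exactly where $r\notin\mathbb{Z}_{>0}$ must be used, and your proposal only asserts it.

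Finally, you have the chirality reversed. In this paper's convention $\overline{tb}(5_2)=-8$ and $\overline{tb}(\overline{5_2})=1$, so $5_2$ is the \emph{negative} twist knot, with $\tau(5_2)=-1$ and $V_i(5_2)=0$ for all $i\geq 0$; what you describe is $\overline{5_2}$. The example in the introduction confirms this. Mirroring it gives $S^3_1(5_2)\cong S^3_1(P(-3,3,8))$, and $P(-3,3,8)$ is ribbon, so $V_0(5_2)=-\tfrac12 d(S^3_1(5_2))=0$. Consequently Step 1(i) should give $V_i(K)=0$ and $\nu^+(K)=0$, not $\nu^+(K)=1$. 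For $r>0$ the $d$-invariants then say nothing more, so the genus and top-grading rank information must all be extracted from $HF_{\mathrm{red}}$.
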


We recall the $d$-invariants formula of Ni and Wu.
\begin{proposition}\label{Prop:dinvformula}\cite[Proposition 1.6, Remark 2.10]{NiWu}
Suppose $p, q>0$, and fix $0\leq i\leq p-1$. Then
$$ d(S^{3}_{\frac{p}{q}}(K), i)=d(S^{3}_{\frac{p}{q}}(U), i)-2\max\{V_{\lfloor\frac{i}{q}\rfloor}(K), V_{\lfloor\frac{p+q-1-i}{q}\rfloor}(K)\}.$$
\end{proposition}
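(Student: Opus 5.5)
This formula is quoted from Ni and Wu, so it can be used as a black box. If I were to reprove it, I would use the Ozsváth--Szabó rational surgery mapping cone formula and compare it with the same cone for the unknot.

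\emph{Setup.} For $p,q>0$ and a fixed $\mathrm{Spin}^c$ label $0\le i\le p-1$, the group $HF^+(S^3_{p/q}(K),i)$ is the homology of a mapping cone
$$D_i\colon \mathbb{A}_i=\bigoplus_{s\in\mathbb{Z}} A^+_{\lfloor (i+ps)/q\rfloor}(K)\longrightarrow \mathbb{B}_i=\bigoplus_{s\in\mathbb{Z}} B^+.$$
Here $D_i$ is built from the maps $v$ and $h$ on each summand. The relative and absolute grading shifts on the summands depend only on $p$, $q$, $i$ and the indexing. They are therefore the same for $K$ and for $U$, and for $U$ every $A^+_k(U)\cong \mathcal{T}^+$.

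\emph{Step 1: reduce to towers.} Passing to $U$-towers, each $A^+_k(K)$ contains a tower on which $v_k$ acts as multiplication by $U^{V_k}$ and $h_k$ as $U^{H_k}$, with $H_k=V_{-k}$. The $d$-invariant is the grading of the bottom of the tower in $H_*(\mathrm{Cone}(D_i))$. This tower comes from the kernel of $D_i$ restricted to the towers, and that kernel is supported on finitely many consecutive summands.

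\emph{Step 2: locate the bottom element.} Using that $V_k$ is nonincreasing and $V_k\ge V_{k+1}\ge V_k-1$, the lowest-degree kernel element sits at the summand whose index is nearest zero. A standard truncation argument makes this precise. This is the summand $s=0$, with index $\lfloor i/q\rfloor$, coupled through $\mathbb{B}$ to the neighbouring summand $s=-1$, with index $\lfloor (i-p)/q\rfloor$.

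\emph{Step 3: compare with the unknot.} The bottom grading is lowered, relative to the unknot computation, by $2\max\{V_{\lfloor i/q\rfloor},\,H_{\lfloor (i-p)/q\rfloor}\}$. The identity
$$H_{\lfloor (i-p)/q\rfloor}=V_{-\lfloor (i-p)/q\rfloor}=V_{\lceil (p-i)/q\rceil}=V_{\lfloor (p+q-1-i)/q\rfloor}$$
turns this into the stated expression. For $U$ all $V_k=0$, so the same computation gives exactly $d(S^3_{p/q}(U),i)$. Since the grading shifts in the two cones agree, subtracting yields the formula.

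\emph{Main obstacle.} The hardest step is Step 2. It requires showing rigorously that the generator of the tower in the homology of the cone is represented in the $s=0$ summand, and that its degree drop is the \emph{maximum} of $V$ and $H$ rather than some other combination over many summands. My first attempt would use the truncated cone, restricting to $|s|\le N$ for $N$ large. I would then argue inductively, by monotonicity of $V$ and $H$, that any cycle with a tower component can be pushed to the central summand without decreasing its grading.
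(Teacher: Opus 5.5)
Your proposal is correct and agrees with the paper, which gives no argument of its own and simply imports the formula from Ni--Wu. Your sketch is essentially Ni--Wu's own proof: compare the mapping cones for $K$ and $U$, locate the bottom of the kernel tower at the $s=0$ and $s=-1$ summands to get the drop $2\max\{V_{\lfloor i/q\rfloor},H_{\lfloor (i-p)/q\rfloor}\}$, then rewrite via $H_{-s}=V_s$. Two small remarks: Step 2 needs only that $V$ is nonincreasing, $H$ is nondecreasing, and the $\mathbb{B}$-tower gradings (which change by $2k_s$ from summand $s$ to $s+1$) are minimized at $s=0$; and $V_k(U)=0$ holds only for $k\ge 0$, which is all the final formula uses.
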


\begin{remark}\label{rem:vi}
For each integer $i\geq 0$, $V_i(K)$ is a non-negative integral concordance invariant of $K$. Moreover, by \cite[Proposition 7.6]{Ras}, $V_{i-1}(K)\geq V_{i}(K)\geq V_{i-1}(K)-1$.
\end{remark}

We also need two theorems from number theory.

\begin{theorem}\label{lem:4k+3}\cite[Theorem 82]{HaWr08}
The number $-1$ is a quadratic non-residue of primes of the form $4k+3$, where $k$ is a non-negative integer. That is, for any integer $x$, $x^2+1$ has no prime factor of the form $4k+3$.
\end{theorem}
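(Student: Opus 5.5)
The plan is to argue by contradiction using Fermat's little theorem. Everything rests on the parity of $\frac{p-1}{2}$ when $p\equiv 3 \pmod 4$. Suppose $p=4k+3$ is a prime and some integer $x$ satisfies $p\mid x^2+1$, that is, $x^2\equiv -1 \pmod p$. This is exactly the statement that $-1$ is a quadratic residue modulo $p$, so the two formulations in the theorem are equivalent and it suffices to rule out such an $x$.

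First I observe that $p\nmid x$. If $p\mid x$, then $x^2+1\equiv 1\pmod p$, which contradicts $p\mid x^2+1$. Hence $x$ is a unit modulo $p$, and Fermat's little theorem gives $x^{p-1}\equiv 1 \pmod p$.

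On the other hand, $p-1=4k+2=2(2k+1)$, so
$$x^{p-1}=(x^2)^{2k+1}\equiv (-1)^{2k+1}=-1 \pmod p.$$
Combining the two congruences gives $1\equiv -1 \pmod p$, i.e.\ $p\mid 2$. This is impossible because $p\geq 3$ is odd. Therefore no such $x$ exists: $-1$ is a quadratic non-residue modulo every prime of the form $4k+3$, and $x^2+1$ has no prime factor of that form.

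There is no real obstacle here. The only points needing care are excluding $p\mid x$ before applying Fermat's little theorem, and noting that $p\neq 2$, so that $-1\not\equiv 1 \pmod p$. Alternatively, one could cite Euler's criterion $\left(\frac{-1}{p}\right)\equiv(-1)^{(p-1)/2}$, but that is essentially the same computation.
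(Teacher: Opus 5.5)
Your proof is correct and complete, including the two necessary checks ($p\nmid x$ before invoking Fermat's little theorem, and $p\neq 2$ so that $-1\not\equiv 1 \pmod p$). The paper gives no proof of its own and simply cites Hardy--Wright; your computation $x^{p-1}=(x^2)^{2k+1}\equiv -1 \pmod p$, contradicting Fermat's little theorem, is the standard argument for this cited fact.
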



\begin{theorem}\label{notwosquares}\cite[Theorem 366]{HaWr08}
A positive integer $n$ is a sum of two squares if and only if all prime factors of $n$ of the form $4k+3~(k\in \mathbb{Z}_{\ge0})$ have even exponents in the standard form of $n$. 
\end{theorem}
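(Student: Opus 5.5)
The plan is to prove the two directions separately. Necessity reduces to Theorem~\ref{lem:4k+3}. Sufficiency reduces, via the multiplicativity of sums of two squares, to Fermat's theorem that every prime $p\equiv 1 \pmod 4$ is a sum of two squares.

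\emph{Necessity.} Suppose $n=a^2+b^2$ and let $d=\gcd(a,b)$, so $a=da'$, $b=db'$ with $\gcd(a',b')=1$ and $n/d^2=a'^2+b'^2$. Let $p=4k+3$ be a prime dividing $n$ with odd exponent. Since $d^2$ contributes an even exponent of $p$, the prime $p$ still divides $m=n/d^2$.

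If $p\mid b'$, then $p\mid a'^2$, so $p\mid a'$, which contradicts coprimality. Hence $b'$ is invertible modulo $p$. Put $x\equiv a'b'^{-1} \pmod p$. Then $x^2+1\equiv b'^{-2}(a'^2+b'^2)\equiv 0 \pmod p$. So $p$ divides $x^2+1$ for some integer $x$, contradicting Theorem~\ref{lem:4k+3}.

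\emph{Sufficiency.} First, the identity
\[
(a^2+b^2)(c^2+d^2)=(ac-bd)^2+(ad+bc)^2
\]
shows that the set of sums of two squares is closed under multiplication. Write $n=2^e\prod p_i^{e_i}\prod q_j^{2f_j}$, where $p_i\equiv 1$ and $q_j\equiv 3 \pmod 4$. Then $2=1^2+1^2$ and $q_j^{2f_j}=(q_j^{f_j})^2+0^2$. It therefore suffices to show that each prime $p\equiv 1\pmod 4$ is a sum of two squares.

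For such $p$, Wilson's theorem gives $\bigl(((p-1)/2)!\bigr)^2\equiv (-1)^{(p-1)/2}(p-1)!\equiv -1 \pmod p$. So there is an integer $a$ with $a^2\equiv -1 \pmod p$.

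Next apply Thue's pigeonhole argument. Let $s=\lfloor\sqrt p\rfloor$, so that $(s+1)^2>p$. Consider the values $u-av \bmod p$ for $0\le u,v\le s$. There are $(s+1)^2>p$ of them, so two coincide. Their difference yields integers $x,y$, not both zero, with $|x|,|y|\le s<\sqrt p$ (strict since $p$ is prime) and $x\equiv ay \pmod p$.

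Then $x^2+y^2\equiv (a^2+1)y^2\equiv 0 \pmod p$ and $0<x^2+y^2<2p$. Hence $x^2+y^2=p$.

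\emph{Main obstacle.} The only nontrivial step is this two-squares theorem for primes $p\equiv 1\pmod 4$, that is, the pigeonhole construction. The rest is bookkeeping of exponents together with the product identity.
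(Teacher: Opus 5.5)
Your proof is correct and complete. The necessity half reduces cleanly to Theorem~\ref{lem:4k+3}: you pass to the primitive representation $n/d^2=a'^2+b'^2$, observe $p\nmid b'$, and produce $x$ with $x^2\equiv -1 \pmod p$. The sufficiency half has no gaps; it combines the product identity, Wilson's theorem for a square root of $-1$ modulo $p\equiv 1 \pmod 4$, and Thue's pigeonhole bound $0<x^2+y^2<2p$.

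The paper gives no argument of its own here; it simply imports the result from Hardy--Wright. So the only difference is that your version, a standard elementary proof, makes the paper self-contained at the cost of a few lines.

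Note also that the paper's applications rest essentially on your necessity half: a prime $3$ occurring to an odd power obstructs a representation as a sum of two squares. The paper invokes the converse in only one place, in Lemma~\ref{Lem:d3anyknotq=2}, to pass from ``$3|p+2t|$ is not a sum of two squares'' to ``$12|p+2t|$ is not''. That step can be handled directly, since $x^2+y^2\equiv 0 \pmod 4$ forces $x$ and $y$ to be even.
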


Applying Proposition~\ref{Prop:dinvformula}, we obtain the following results about integral and half-integral surgeries along knots.

\begin{proposition}\label{Prop:surcharanyknot}
Suppose $K$ and $K'$ are any two knots in $S^3$, $p$ is a non-zero integer. If $S^3_p(K)$ and $S^3_{-p}(K')$ are orientation-preserving homeomorphic, then $p$ has no prime factor of the form $4k+3$.
\end{proposition}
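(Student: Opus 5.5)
The plan is to use the torsion linking form on $H_1$, which is an oriented homeomorphism invariant, to produce a square root of $-1$ modulo $p$; Theorem~\ref{lem:4k+3} then rules out primes of the form $4k+3$. The $d$-invariant formula of Proposition~\ref{Prop:dinvformula} is not needed for this statement, since it only uses the linking form, which the $d$-invariants refine modulo $2\mathbb{Z}$.

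\textbf{Reductions.} Swapping the roles of $K$ and $K'$ turns a homeomorphism $S^3_p(K)\cong S^3_{-p}(K')$ into one $S^3_{-p}(K')\cong S^3_{p}(K)$. So we may assume $p>0$. If $p=1$ there is nothing to prove, so assume $p\ge 2$.

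\textbf{Step 1: compute the two linking forms.} For $p\neq 0$ we have $H_1(S^3_p(K))\cong \mathbb{Z}/p$, generated by the class of a meridian $\mu$ of $K$. Its linking form is
$$\lambda(\mu,\mu)=\tfrac{1}{p}\pmod 1,$$
up to a global sign convention that we fix once and for all. With the same convention, $H_1(S^3_{-p}(K'))\cong\mathbb{Z}/p$ is generated by a meridian $\mu'$ of $K'$, and
$$\lambda'(\mu',\mu')=-\tfrac{1}{p}\pmod 1.$$
This is the standard computation from the surgery description: in the solid torus glued in, the surgery slope bounds a disk, and $p\mu$ is homologous to a multiple of the surgery curve. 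One can also see it by noting that $S^3_{-p}(K')=-S^3_{p}(\overline{K'})$, and reversing orientation negates the linking form. This sign is the only delicate point, and it is exactly what makes $p$ and $-p$ differ.

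\textbf{Step 2: get a square root of $-1$.} An orientation-preserving homeomorphism $f\colon S^3_p(K)\to S^3_{-p}(K')$ induces an isomorphism $f_*\colon\mathbb{Z}/p\to\mathbb{Z}/p$ that preserves linking forms. Write $f_*(\mu)=a\mu'$ with $\gcd(a,p)=1$. Then
$$\tfrac1p\equiv a^2\cdot\left(-\tfrac1p\right)\pmod 1,$$
so $a^2+1\equiv 0\pmod p$.

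\textbf{Step 3: conclude.} Every prime factor of $p$ divides $a^2+1$. By Theorem~\ref{lem:4k+3}, $a^2+1$ has no prime factor of the form $4k+3$, so neither does $p$.

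The main obstacle is carefully justifying the opposite signs of the linking forms in Step 1. If one prefers to stay within the paper's tools, an alternative is to compare $d$-invariants modulo $2\mathbb{Z}$ using Proposition~\ref{Prop:dinvformula}. There the correction terms $2\max\{V\}$ are even integers, so $d(S^3_p(K),i)\equiv d(L(p,-1),i)$ modulo $2\mathbb{Z}$. Comparing with $-d(L(p,-1),j)$ under the bijection of $\mathrm{Spin}^c$ structures induced by the homeomorphism yields the same congruence $a^2\equiv -1\pmod p$.
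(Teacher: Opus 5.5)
Your proof is correct, and it takes a different, more classical route than the paper.

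The paper works through the Ni--Wu formula (Proposition~\ref{Prop:dinvformula}). It rewrites $d(S^3_{-p}(K'),i)=-d(S^3_p(\overline{K'}),i)$, matches $d$-invariants via an unknown permutation $\sigma$ of $\mathrm{Spin}^c$ structures, and looks only at $i=1$. Since the correction terms $2\max\{V\}$ are integers, the quantity $d(S^3_p(U),1)+d(S^3_p(U),\sigma(1))=\frac{p-1}{2}+\frac{\sigma(1)^2+1}{p}-1-\sigma(1)$ must be an integer. This gives $p\mid\sigma(1)^2+1$ for odd $p$, but only $\frac{p}{2}\mid\sigma(1)^2+1$ for even $p$. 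Theorem~\ref{lem:4k+3} then finishes exactly as in your Step 3.

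Your argument gets the same arithmetic straight from the linking form, which is essentially the information that integrality condition detects. What this buys:
\begin{itemize}
\item It needs no Heegaard Floer input, no information about $\sigma$, and no parity split.
\item It yields $p\mid a^2+1$ uniformly, which is slightly sharper than the paper: it also excludes $4\mid p$, since $a^2+1\not\equiv 0\pmod 4$.
\item The same computation with $\lambda(\mu,\mu)=\pm\frac{2}{p}$ gives $a^2\equiv-1\pmod p$ for $\pm\frac{p}{2}$-surgeries with $p$ odd. So it also recovers Proposition~\ref{Prop:sucharanyknot2}, and in fact shows every prime factor of $|p|$ is $\equiv 1\pmod 4$.
\end{itemize}
The paper's route mainly has the advantage of keeping both obstructions inside one toolkit.

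Your closing $d$-invariant alternative is only a sketch, since it would need the affine behaviour of the induced $\mathrm{Spin}^c$ bijection. The main argument does not depend on it.
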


\begin{proof}
Suppose there is an orientation-preserving homeomorphism between $S^{3}_{-p}(K')$ and $S^{3}_{p}(K)$. We may as well assume $p>0$. Then the $d$-invariants $$d(S^{3}_{-p}(K'), i)=d(S^{3}_{p}(K), \sigma(i)),$$ where $i\in\{0,1,\ldots, p-1\}$ the set of $\mathrm{Spin}^{\textit{c}}$ structures, and $\sigma$ is a permutation of the set. 

  Since $$d(S^{3}_{-p}(K'), i)=-d(S^{3}_{p}(\overline{K'}), i),$$ we have 
$$d(S^{3}_{p}({K}), \sigma(i))=-d(S^{3}_{p}(\overline{K'}), i).$$
By Proposition~\ref{Prop:dinvformula}, 
$$d(S^{3}_{p}({K}), i)=d(S^{3}_{p}(U), i)-2\max \{V_{i}({K}), V_{p-i}({K})\},$$ 
$$d(S^{3}_{p}(\overline{K'}), i)=d(S^{3}_{p}(U), i)-2\max \{V_{i}(\overline{K'}), V_{p-i}(\overline{K'})\},$$ where $i=0,1,\ldots, p-1$. Consider the case $i=1$,  we get
$$d(S^3_p(U),1)+d(S^3_p(U),\sigma(1))=2(\max \{V_{1}(\overline{K'}), V_{p-1}(\overline{K'})\}+\max \{V_{\sigma(1)}({K}), V_{p-\sigma(1)}({K})\})\in \mathbb{Z}.$$
We have \begin{align*}
    d(S^3_p(U),1)+d(S^3_p(U),\sigma(1))&=\frac{(p-2)^2}{4p}-\frac{1}{4}+\frac{(p-2\sigma(1))^2}{4p}-\frac{1}{4}\\
&=\frac{p^2-p+2(\sigma(1)^2+1)}{2p}-1-\sigma(1)\\
&=\frac{p-1}{2}+\frac{\sigma(1)^2+1}{p}-1-\sigma(1)\in\mathbb{Z}.
\end{align*}

If $p$ is odd, then $p$ divides $\sigma(1)^2+1$. By Theorem~\ref{lem:4k+3},  $p$ has no prime factor of the form $4k+3$. If $p$ is even, then $\frac{p}{2}$ divides $\sigma(1)^2+1$. So $\frac{p}{2}$ has no prime factor of the form $4k+3$. This further implies $p$ has no prime factor of the form $4k+3$.
\end{proof}


\begin{proposition}\label{Prop:sucharanyknot2}
Suppose $K$ and $K'$ are any two knots, $p$ is an odd integer. If $S^3_{\frac{p}{2}}(K)$ and $S^3_{-\frac{p}{2}}(K')$ are orientation-preserving homeomorphic, then $|p|=4l+1$ for some integer $l$.
\end{proposition}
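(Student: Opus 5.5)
We argue as in the proof of Proposition~\ref{Prop:surcharanyknot}, now with $q=2$. Replacing $K,K'$ by their mirrors if needed, we may assume $p>0$; $p$ is odd. Since $H_1$ has order $p$, we label the $\mathrm{Spin}^c$ structures by $i\in\{0,\dots,p-1\}$. The orientation-preserving homeomorphism $S^3_{-p/2}(K')\cong S^3_{p/2}(K)$ gives a permutation $\sigma$ with $d(S^3_{-p/2}(K'),i)=d(S^3_{p/2}(K),\sigma(i))$. Together with $d(S^3_{-p/2}(K'),i)=-d(S^3_{p/2}(\overline{K'}),i)$ this yields
$$d(S^3_{p/2}(K),\sigma(i))+d(S^3_{p/2}(\overline{K'}),i)=0 .$$
Apply Proposition~\ref{Prop:dinvformula} with $q=2$ to both terms. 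The correction terms $2\max\{V_{\lfloor i/2\rfloor},V_{\lfloor (p+1-i)/2\rfloor}\}$ are even integers by Remark~\ref{rem:vi}. Hence
$$d(S^3_{p/2}(U),i)+d(S^3_{p/2}(U),\sigma(i))\in 2\mathbb{Z}\subset\mathbb{Z}\quad\text{for every } i .$$

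Next we compute the lens space terms explicitly. We use the recursive formula for $d$ of lens spaces, with $S^3_{p/2}(U)=L(p,-2)=-L(p,2)$ and $p\equiv 1 \pmod 2$. This gives a closed form
$$d(S^3_{p/2}(U),i)=\frac{(p+1-2i)^2}{8p}+c(i),$$
where $c(i)\in\{0,\pm\tfrac14\}$ is a constant coming from the $L(2,1)$ step. Only its value modulo $\mathbb{Z}$ and its dependence on the parity of $i$ will matter. We then choose a convenient $i$, for instance $i=0$ or $i=1$, and write $j=\sigma(i)$. Clearing denominators in the integrality condition and reducing, we aim for a divisibility of the form
$$p \mid (p+1-2i)^2+(p+1-2j)^2 .$$
Mod $p$ this reads $p\mid (1-2i)^2+(1-2j)^2$. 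Since $p$ is odd, we may cancel the factor $4$ after substituting $x=2i-1$, $y=2j-1$. This leaves $p\mid x^2+y^2$ for some fixed small $x$ (say $x=\pm1$), that is, $p\mid 1+y^2$.

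Once $p\mid y^2+1$ is obtained, Theorem~\ref{lem:4k+3} shows that no prime factor of $p$ is $\equiv 3 \pmod 4$. Since $p$ is odd, every prime factor is $\equiv 1\pmod 4$, so $p\equiv 1 \pmod 4$, i.e.\ $|p|=4l+1$. If the choice of $i$ only produces $p\mid x^2+y^2$ with $\gcd(x,p)\neq1$, we instead pick $i$ so that $x=\pm1$. Alternatively, we argue via Theorem~\ref{notwosquares} on the quotient, as in the even case of Proposition~\ref{Prop:surcharanyknot}.

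The main obstacle is the bookkeeping in the second step. We must get the exact lens space $d$-invariant for $L(p,-2)$ with the right labeling convention matching Proposition~\ref{Prop:dinvformula}, including the $\pm\tfrac14$ corrections depending on the parity of $i$. We must also make sure that the residual constants cancel mod $\mathbb{Z}$, so that what remains is precisely $(x^2+y^2)/(8p)$ up to integers and multiples of $1/8$ that the evenness of the sum absorbs. If the parity-dependent constants do not cancel for $i=0$, we try $i=1$ or $i=\frac{p+1}{2}$, the self-conjugate structure for which $2i-p-1=0$. With the latter choice, the condition reduces to $p \mid (p+1-2j)^2/ \text{(unit)}$ plus a constant, which should again force $-1$ to be a square mod $p$.
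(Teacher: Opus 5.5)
Your argument is correct, but it takes a different route from the paper's, and it proves a stronger conclusion.

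\textbf{The paper's route.} The paper never uses integrality for this proposition. It notes that $d(S^3_{p/2}(K),i)=d(S^3_{p/2}(K),p+1-i)$, so every level set of the $d$-invariants has even cardinality except the one containing the self-conjugate class $i=\frac{p+1}{2}$. Hence $\sigma$ must match these two classes. Substituting the exact value $d(S^3_{p/2}(U),\frac{p+1}{2})=-\frac{1+(-1)^{(p+1)/2}}{4}$ then gives $-\frac{1+(-1)^{(p+1)/2}}{2}=2(V+V')\in 2\mathbb{Z}_{\ge 0}$, where $V,V'$ are the relevant $V$-invariants of $K$ and $\overline{K'}$. This forces $\frac{p+1}{2}$ to be odd. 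That route needs the exact lens space constant and the non-negativity of the $V_i$, and it yields only $p\equiv 1 \pmod 4$.

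\textbf{Your route.} Modelled on Proposition~\ref{Prop:surcharanyknot}, your argument needs much less. The constant is actually $c(i)=-\frac{1+(-1)^i}{4}\in\{0,-\frac12\}$, not $\{0,\pm\frac14\}$. This does not matter: any constant in $\frac18\mathbb{Z}$ contributes a multiple of $p$ once you multiply by $8p$. So the cancellation you flag as the main obstacle never arises. Taking $i=0$ gives directly $p\mid 1+(2\sigma(0)-1)^2$, and Theorem~\ref{lem:4k+3} finishes the proof.

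\textbf{What this buys.} Your argument shows that every prime factor of $|p|$ is $\equiv 1 \pmod 4$. For example, it excludes $p=21$, which the paper's argument allows. With this stronger statement, the $q=2$ case at the end of the proof of Theorem~\ref{Thm:anyknot} could be closed exactly like the $q=1$ case, using the single factor of $3$ in $3|p+2t|$.

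\textbf{Drop the fallback.} Your alternative choice $i=\frac{p+1}{2}$ would fail under integrality alone: there $2i-1=p$, so you only get $p\mid y^2$. That class is useful only in combination with the paper's level-set parity argument and the exact lens space value.
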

\begin{proof}
We may as well assume $p\ge1$. The $d$-invariants $$d(S^{3}_{-\frac{p}{2}}(K'), i)=d(S^{3}_{\frac{p}{2}}(K), \sigma(i)),$$ where $i\in\{0,1,\ldots, p-1\}$ the set of $\mathrm{Spin}^{\textit{c}}$ structures, and $\sigma$ is a permutation of the set.    Since $$d(S^{3}_{-\frac{p}{2}}(K'), i)=-d(S^{3}_{\frac{p}{2}}(\overline{K'}), i),$$
we have $$d(S^{3}_{\frac{p}{2}}(K), \sigma(i))=-d(S^{3}_{\frac{p}{2}}(\overline{K'}), i).$$
By Proposition~\ref{Prop:dinvformula}, 
    $$d(S^{3}_{\frac{p}{2}}(K), i)=d(S^{3}_{\frac{p}{2}}(U), i)-2\max \{V_{\lfloor \frac{i}{2}\rfloor}(K), V_{\lfloor \frac{p+1-i}{2}\rfloor}(K)\},$$
    $$d(S^{3}_{\frac{p}{2}}(\overline{K'}), i)=d(S^{3}_{\frac{p}{2}}(U), i)-2\max \{V_{\lfloor \frac{i}{2}\rfloor}(\overline{K'}), V_{\lfloor \frac{p+1-i}{2}\rfloor}(\overline{K'})\}.$$
According to \cite[Proposition 4.8]{OSZgrading} \footnote{In \cite{OSZgrading}, $S^3_{\frac{p}{q}}(U)$ is denoted by $L(p,q)$.}, $$d(S^{3}_{\frac{p}{2}}(U),i)=\frac{(2i-1-p)^2}{8p}-\frac{1+(-1)^i}{4}.$$ It can be obtained that $$d(S^{3}_{\frac{p}{2}}(K),i)=d(S^{3}_{\frac{p}{2}}(K),p+1-i),$$  $$d(S^{3}_{\frac{p}{2}}(\overline{K'}),i)=d(S^{3}_{\frac{p}{2}}(\overline{K'}),p+1-i).$$ For $s\in\{0, 1, \ldots, p-1\}$, let $$I(s)=\{0\le i\le p-1:d(S^{3}_{\frac{p}{2}}(K),i)=d(S^{3}_{\frac{p}{2}}(K),s)\},$$ $$J(s)=\{0\le i\le p-1:d(S^{3}_{\frac{p}{2}}(\overline{K'}),i)=d(S^{3}_{\frac{p}{2}}(\overline{K'}),s)\}.$$ Then both $|I(s)|$ and $|J(s)|$ are even unless $s=\frac{p+1}{2}$. So we have $\sigma(J(\frac{p+1}{2}))=I(\frac{p+1}{2})$, and then
    $$d(S^{3}_{\frac{p}{2}}(K),\frac{p+1}{2})=-d(S^{3}_{\frac{p}{2}}(\overline{K'}),\frac{p+1}{2}).$$
    That is
    $$-\frac{1+(-1)^{\frac{p+1}{2}}}{2}=2(V_{\lfloor\frac{p+1}{4}\rfloor}(K)+V_{\lfloor\frac{p+1}{4}\rfloor}(\overline{K'})).$$
    So $p=4l+1$, $l\in\mathbb{Z}$.
\end{proof}
\bigskip

\subsection{Contact surgery} We recall the algorithm of Ding, Geiges and Stipsicz \cite{ding2004surgery} to convert a contact $\frac{p}{q}$-surgery diagram to a contact $(\pm1)$-surgery diagram.

(1) Contact $\frac{p}{q}$-surgery along a Legendrian knot $L$ with $\frac{p}{q}<0$:

(i) Stabilize $L$ $|a_1+2|$ times, where 
\[
\frac{p}{q} = a_1+1 - \cfrac{1}{a_2 - \cfrac{1}{\cdots - \cfrac{1}{a_{m-1} - \cfrac{1}{a_m}}}},
\]
where $a_i\leq -2$ for $1\leq i\leq m$. Let the resulting Legendrian knot be $L_1$.

(ii) For $i=2, \ldots, n$, let $L_i$ be the Legendrian push-off of $L_{i-1}$ and stabilize it $|a_{i}+2|$ times.

(iii) Then a contact $\frac{p}{q}$-surgery along $L$ corresponds to a contact $(-1)$-surgeries along a link $L_1\sqcup L_2\sqcup\ldots\sqcup L_n$.

(2) Contact $\frac{p}{q}$-surgery along a Legendrian knot $L$ with $\frac{p}{q}>0$:

(i) Choose a positive integer $k$ such that $q-kp<0$. Let $\tilde{r}=\frac{p}{q-kp}$.

(ii) Let $L_1,\ldots, L_k$ be $k$ successive Legendrian push-offs of $L$.

(iii) Then a contact $\frac{p}{q}$-surgery along a Legendrian knot $L$ corresponds to contact $(+1)$-surgeries along $L_1,\ldots, L_k$ and a contact $\tilde{r}$-surgery along $L$.

\begin{lemma}\cite{DiGe01}
Contact $(\pm\frac{1}{n})$-surgery ($n\in\mathbb{N}$) along a Legendrian knot $L$ yields the same contact manifold as contact $(\pm1)$-surgeries along $n$ Legendrian push-offs of $L$.
\end{lemma}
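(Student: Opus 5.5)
The plan is to localize everything to a standard neighborhood of $L$ and then appeal to the classification of tight contact structures on solid tori. Let $N$ be a standard neighborhood of $L$. It is a solid torus with convex boundary and two dividing curves of slope $tb(L)$ (measured in the Seifert framing). Choose the $n$ Legendrian push-offs $L_1,\ldots,L_n$ of $L$ inside $N$, in pairwise disjoint standard neighborhoods $N_1,\ldots,N_n$ that are nested in the $T^2\times I$ collar structure of $N$. Both operations we want to compare leave $S^3\setminus N$ untouched. Each replaces $N$ by a new solid torus $V$ glued along the same convex torus $\partial N$. So it suffices to show two things: the two resulting solid tori have the same meridian slope, and both carry tight contact structures that agree near $\partial N$.

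\textbf{Step 1 (topology).} We check that surgery on the $n$ parallel copies $L_i$, each with framing $tb(L)\pm1$, produces the same manifold as $tb(L)\pm\frac1n$ surgery on $L$. Any two push-offs are parallel with linking number $tb(L)$. Hence the $n$ surgeries with framing $tb\pm1$ are equivalent, by handle slides sliding each $L_i$ over $L_{i-1}$ (equivalently, by an iterated slam-dunk/Rolfsen twist), to a single surgery. The framing coefficients combine as the continued fraction $tb\pm1 \mp 1 \pm \cdots$, which collapses to the slope $tb\pm\frac1n$. An alternative is to track the image of the meridian of $N$ through the successive Dehn fillings inside $N$; this shows the new meridian has slope $tb\pm\frac1n$. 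In both cases the filling slope has distance one from the dividing slope $tb$ on $\partial N$.

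\textbf{Step 2 (uniqueness).} By Honda's classification of tight structures on solid tori (equivalently, by the uniqueness built into the definition of contact $\frac1q$-surgery recalled in the introduction), there is a unique tight contact structure on a solid torus with this convex boundary. The reason is that the meridian meets the dividing set in exactly two points. Contact $\pm\frac1n$-surgery is defined to use exactly this structure. Hence the lemma reduces to showing that the solid torus $V$ produced by the $n$ contact $(\pm1)$-surgeries inside $N$ is tight.

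\textbf{Step 3 (tightness, the main obstacle).} In the $(-1)$ case this step is easy. The surgeries are Legendrian surgeries on a Legendrian link in $N\subset(S^3,\xi_{st})$, so the whole result is Stein fillable and hence tight, and $V$ is a subset of it, so $V$ is tight. The $(+1)$ case is where the real work lies, because $+1$ surgeries can create overtwisted disks. Here I would argue by induction on $n$. The case $n=1$ is the definition of contact $(+1)$-surgery. For the inductive step, assume the surgeries on $L_1,\ldots,L_{n-1}$ have produced the unique tight solid torus $V_{n-1}$ with meridian slope $tb+\frac{1}{n-1}$. Then $L_n$ is a Legendrian push-off of the core, so it has a standard neighborhood whose complement in $V_{n-1}$ is a $T^2\times I$ layer. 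The $+1$ surgery on $L_n$ replaces that neighborhood by a solid torus $W$ whose meridian has distance one from the dividing slope; $W$ is therefore the tight solid torus with this boundary.

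It then remains to show that $W$ together with the layer is tight. I would try to do this by exhibiting the layer as minimally twisting: its two boundary dividing slopes have distance one, so it is a nonrotative or basic-slice layer. Gluing it to $W$ then gives a solid torus whose boundary slope pair is again distance one, and Honda's gluing results show that such a gluing is tight. If this gluing argument proves delicate, a fallback is a convex-surface computation. Take a meridian disk $D$ of $V$ with Legendrian boundary meeting the dividing set twice. Then the dividing set on $D$ is a single arc, which by Giroux's criterion forces tightness once we know $V$ is built from pieces whose own meridian disks have no closed dividing curves.
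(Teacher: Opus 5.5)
Your overall structure is sound, and the paper has no proof of its own to compare it with: it quotes this lemma from \cite{DiGe01}, so your argument has to stand alone. Several parts are fine. The meridian-tracking in Step 1 correctly gives $\mu+n\lambda_c$; the slam-dunk version is really $tb\pm1$ followed by a chain of $\pm2$-framed unknots, but that is cosmetic. The uniqueness argument in Step 2 is correct, and so is the $(-1)$ case via Stein fillability.

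\textbf{The gap.} The gap is the tightness of the new solid torus in the $(+1)$ case, which is the only nontrivial point, and neither of your two arguments establishes it.

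Your main argument rests on a false claim: the layer $V_{n-1}\setminus\nu L_n$ does \emph{not} have boundary slopes at distance one. Its outer boundary $\partial N$ has dividing slope $\lambda_c$. Its inner boundary $\partial\nu L_n$ has dividing slope equal to the contact framing of $L_n$. Since $L_n$ lies in a collar of $\partial N$ untouched by the earlier surgeries, and its contact push-off is a $\lambda_c$-curve on a parallel torus there, this slope is again $\lambda_c$. So the layer is non-rotative, not a basic slice.

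There is also no general ``Honda gluing result'' saying that a tight solid torus glued to a tight layer is tight. Such a union can be overtwisted, for instance whenever the layer contains a torus of meridian slope. You therefore need real information about this particular layer.

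The Giroux-criterion fallback fails as well. A single dividing arc on one meridian disk $D$ only shows that a neighbourhood of $\partial V\cup D$ is tight. The ball obtained by cutting along $D$ is uncontrolled: inserting an overtwisted ball away from $D$ changes nothing on $D$.

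\textbf{The missing idea.} What you need is that the layer is, up to a contactomorphism fixing both boundary tori, the $I$-invariant collar. Equivalently, $L_n$ is Legendrian isotopic inside $V_{n-1}$ to the core of a standard neighbourhood. This follows from your inductive hypothesis in three steps.
\begin{enumerate}
\item The layer is tight, because it sits in the tight solid torus $V_{n-1}$.
\item It has two dividing curves of slope $\lambda_c$ on each boundary component.
\item It is minimally twisting. A rotation by $\pi$ would produce a convex torus parallel to $\partial V_{n-1}$ with dividing slope equal to the meridian slope $\mu+(n-1)\lambda_c$. Legendrian realization of a meridian on that torus would then give an overtwisted disk.
\end{enumerate}
Honda's classification of non-rotative tight layers with two dividing curves on each side now identifies the layer with the $I$-invariant one. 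The horizontal annulus must have all dividing arcs running across, and the only remaining freedom is a boundary-parallel torus twist, which is the identity on both boundary tori.

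Hence $V_n=W\cup(\text{layer})$ is contactomorphic to $W$, that is, to a single contact $(+1)$-surgery on the core of a standard neighbourhood, which is tight by definition. With this step in place your induction closes.
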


We also recall the formulas of Euler classes and $d_3$-invariants of contact 3-manifolds in \cite{DuKe}, which stems from \cite{ding2004surgery}.
Let $L_1\sqcup \cdots\sqcup L_k\subset (S^3, \xi_{st})$ be an oriented Legendrian link in a contact surgery diagram, with contact surgery coefficients $\pm1/n_i$ on $L_i$, where $n_i\in\mathbb{N}$ and $i=1,\ldots, k$. Write $tb_i$ for the Thurston–Bennequin invariant of $L_i$, $r_i$ for the rotation number of $L_i$, and $l_{ij}$ for the linking number between $L_i$ and $L_j$. Then the topological surgery coefficient on $L_i$ is $\frac{p_{i}}{q_{i}}=\frac{\pm1+n_{i}tb_{i}}{n_{i}}$. We have a \textit{generalized linking matrix}: 
    \begin{align*}
	Q:=\begin{pmatrix}
	p_1&q_2 l_{12} &\cdots&q_k l_{1k}\\
	q_1 l_{21} & p_2&&\\
	\vdots&&\ddots\\
	q_1 l_{k1}&&& p_k
	\end{pmatrix}.
    \end{align*}

\begin{theorem}\label{thm:d3}\cite{DuKe}
Let $L_1 \sqcup \ldots\sqcup  L_k$ be an oriented Legendrian link in $(S^3, \xi_{st})$ and denote by $(M, \xi)$ the contact manifold obtained from $(S^3, \xi_{st})$ by contact $(\pm1/n_i)$-surgeries along this link ($n_i \in \mathbb{N}$).

1. The Poicar\'e-dual of the Euler class is given by
$$PD (e(\xi)) =\sum\limits_{i=1}^{k}n_{i}r_{i} [\mu_i]\in H_1(M),$$ 
 where $[\mu_i], i=1,\ldots k$, are subject to the linear relation $Q\boldsymbol{\mu}=\mathbf{0}$, with $\boldsymbol{\mu}=([\mu_1],\ldots,[\mu_n])^T$. 

2. The Euler class $e(\xi)$ is torsion if and only if there exists a rational
solution $\boldsymbol{b} \in \mathbb{Q}^k$ of $Q\boldsymbol{b} = \boldsymbol{r}$, where $\boldsymbol{r}=(r_1,\ldots, r_k)^T$. In this case, the $d_3$-invariant computes as
$$d_3(\xi)=\frac{1}{4}(\sum\limits_{i=1}^{k}
n_i b_i r_i+(3-n_i) \operatorname{sign}_i)-\frac{3}{4}\sigma(Q)-\frac{1}{2},$$
where $\operatorname{sign}_i$ denotes the sign of the contact surgery coefficient of $L_i$.
\end{theorem}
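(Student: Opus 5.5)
The plan is to reduce to the case of contact $(\pm1)$-surgeries, where the Ding--Geiges--Stipsicz formula for $d_3$ is available, and then to show that the data of the enlarged surgery link collapses to the generalized linking matrix $Q$. By the lemma of \cite{DiGe01}, contact $(\pm1/n_i)$-surgery along $L_i$ equals contact $(\pm1)$-surgery along $n_i$ Legendrian push-offs $L_i^1,\dots,L_i^{n_i}$ of $L_i$. Each push-off has the same $tb_i$ and $r_i$. Two push-offs of $L_i$ link each other $tb_i$ times, and push-offs of $L_i$ and $L_j$ link $l_{ij}$ times. Let $A$ be the ordinary integral linking matrix of this enlarged link $\mathbb{L}$, with diagonal entries $tb_i\pm1$. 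Let $X$ be the $4$-dimensional $2$-handlebody it bounds, and let $\mathbf{R}$ be the vector listing $r_i$ once for each push-off $L_i^j$.

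For the $(\pm1)$-case I would use the standard formulas:
\[
PD(e(\xi))=\sum_{\text{components}} r(L_i^j)\,[\mu_i^j],
\qquad
d_3(\xi)=\tfrac14\bigl(c^2-3\sigma(X)-2\chi(X)\bigr)+q .
\]
Here $c^2=\mathbf{R}^TA^{-1}\mathbf{R}$, computed via any rational solution $A\mathbf{x}=\mathbf{R}$, and $q$ is the number of $(+1)$-components. For the Euler class, the meridians $[\mu_i^j]$ of the push-offs of one component are homologous in $M$: the differences of consecutive relations of $A$ give $[\mu_i^j]=[\mu_i^{j+1}]$. Summing the relations of each block then shows that the common class $[\mu_i]$ satisfies exactly $Q\boldsymbol\mu=0$. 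This yields $PD(e(\xi))=\sum n_ir_i[\mu_i]$, which is part 1.

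For part 2, I would look for a solution of $A\mathbf{x}=\mathbf{R}$ that is constant on each block, $x_i^j=b_i$. Plugging this in, row $(i,j)$ reads $(n_itb_i\pm1)b_i+\sum_{k\neq i}n_kl_{ik}b_k=r_i$. Up to the convention $q_k=n_k$, this is precisely row $i$ of $Qb=r$, so torsion of $e$ is equivalent to solvability of $Qb=r$. In that case
\[
c^2=\sum_{i,j}x_i^jr_i=\sum_i n_ib_ir_i .
\]
Also $\chi(X)=1+\sum_i n_i$ and $q=\sum_{\operatorname{sign}_i=+}n_i$.

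The remaining and main step is the signature identity
\[
\sigma(A)=\sigma(Q)+\sum_i (n_i-1)\operatorname{sign}_i .
\]
I would prove it by an integral change of basis within each block, replacing $e_i^1,\dots,e_i^{n_i}$ with $e_i^1$ and the differences $e_i^{j+1}-e_i^j$. A difference vector pairs to zero with everything outside its block, and the $tb_i$ contributions cancel inside it. The form restricted to the span of the differences is therefore $\pm$ the tridiagonal matrix with entries $2$ and $-1$, which is definite of sign $\operatorname{sign}_i$ and rank $n_i-1$. Completing the square against the remaining vectors (or alternatively using the sum vector $\sum_je_i^j$, whose self-pairing is $n_i(n_itb_i\pm1)=n_ip_i$ after the substitution $q_i=n_i$) leaves a form congruent to a rescaling of $Q$. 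Such a rescaling by positive factors does not change the signature. I expect care with this Schur-complement step, and with possible degeneracy of $Q$, to be the real obstacle; the degenerate case can be handled by a continuity or perturbation argument or by treating the kernel separately.

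Finally, combine the pieces. Each component contributes $-3(n_i-1)\operatorname{sign}_i-2n_i$ from $-3\sigma-2\chi$, plus $4n_i$ if it is positive (since $q$ enters with coefficient $1=\tfrac44$). This equals $3-n_i$ for positive components and $n_i-3$ for negative ones, i.e. $(3-n_i)\operatorname{sign}_i$ in both cases. Together with the constant $-\tfrac{2}{4}=-\tfrac12$ from $\chi$ and $-\tfrac34\sigma(Q)$, this recovers the stated formula for $d_3(\xi)$.
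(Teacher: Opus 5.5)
Your proof is correct. The paper gives no argument of its own for this statement: it is quoted from \cite{DuKe} and described as stemming from, and generalizing, the Ding--Geiges--Stipsicz formula (Theorem~\ref{thm:d3s}). Your reduction is the derivation that description points to: replace each $\pm1/n_i$-surgery by $n_i$ push-offs, apply the $(\pm1)$-formula to the enlarged link, and relate its linking matrix $A$ to $Q$. The final bookkeeping $-3(n_i-1)\operatorname{sign}_i-2n_i+4n_i[\operatorname{sign}_i=+]=(3-n_i)\operatorname{sign}_i$ also checks out.

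A few points to tighten:

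\begin{itemize}
\item \textbf{Part 1.} Summing the relations of block $i$ gives $n_i(Q\boldsymbol\mu)_i=0$, which over $\mathbb{Z}$ is weaker than what you need. Instead, once $[\mu_i^j]=[\mu_i^{j+1}]$ is known, any single row $(i,j)$ of $A$ is already row $i$ of $Q\boldsymbol\mu=\mathbf 0$. Replacing the rows of a block by one row plus consecutive differences is a unimodular operation, and each difference relation is $\pm(\mu_i^j-\mu_i^{j+1})$ with unit coefficient, so the two presentations of $H_1(M)$ agree.

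\item \textbf{Torsion criterion.} For the \emph{only if} direction, note that the same row differences force every rational solution of $A\mathbf x=\mathbf R$ to be block-constant. Hence it yields a solution of $Q\boldsymbol b=\boldsymbol r$.

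\item \textbf{Signature step.} This is not really an obstacle. Since $A(s_i,e_i^m)=n_itb_i\pm1$ is independent of $m$, the sum vectors $s_i=\sum_j e_i^j$ are $A$-orthogonal to all differences $e_i^{j+1}-e_i^j$.
  \begin{itemize}
  \item In the rational basis $\{s_i\}\cup\{e_i^{j+1}-e_i^j\}$ the form is block diagonal, $DQ\oplus\bigoplus_i\operatorname{sign}_i\,C_{n_i-1}$. Here $D=\operatorname{diag}(n_1,\dots,n_k)$ and $C_m$ is the $A_m$ Cartan matrix.
  \item Sylvester's law then gives $\sigma(A)=\sigma(DQ)+\sum_i(n_i-1)\operatorname{sign}_i$ directly. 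No Schur complement or perturbation is needed, even when $Q$ is degenerate.
  \item With $e_i^1$ in place of $s_i$ the blocks do not decouple, since $A(e_i^1,e_i^2-e_i^1)=-\operatorname{sign}_i$.
  \end{itemize}

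\item \textbf{From $\sigma(DQ)$ to $\sigma(Q)$.} The paper uses the eigenvalue signature of the non-symmetric $Q$. Write $Q=SD$ with $S$ symmetric ($S_{ii}=p_i/n_i$, $S_{ij}=l_{ij}$). Then $Q$ is similar to $D^{1/2}SD^{1/2}$, which is congruent to $DSD=DQ$. This is what justifies your positive rescaling claim; it relies on $DQ$ being symmetric.

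\item \textbf{Minor.} $q_k=n_k$ is automatic rather than a convention, since $\gcd(n_ktb_k\pm1,n_k)=1$.
\end{itemize}
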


By \cite[Reamrk 5.2]{DuKe}, all eigenvalues of $Q$ are real. So we can define the signature $\sigma(Q)$ as the difference of the numbers of positive and negative eigenvalues of $Q$. Theorem~\ref{thm:d3} is a generalization of the following algorithm. 

\begin{theorem}\label{thm:d3s}\cite{ding2004surgery}
If $L_1 \sqcup \ldots\sqcup  L_k$ is an oriented Legendrian link in $(S^3, \xi_{st})$ and  $(M, \xi)$ is the contact manifold obtained from $(S^3, \xi_{st})$ by contact $(\pm1)$-surgeries along it with torsion Euler class, 
then
$$d_3(\xi)=\frac{1}{4}(\sum\limits_{i=1}^{k}
b_i r_i+2 \operatorname{sign}_i)-\frac{3}{4}\sigma(Q)-\frac{1}{2}=\frac{1}{4}\boldsymbol{r}^{T}Q^{-1}\boldsymbol{r}+\frac{1}{2}\sum\limits_{i=1}^{k}\operatorname{sign}_{i}-\frac{3}{4}\sigma(Q)-\frac{1}{2},$$
where $\operatorname{sign}_i$ denotes the sign of the contact surgery coefficient of $L_i$. 
\end{theorem}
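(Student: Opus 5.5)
The plan is to obtain the formula as the special case $n_i=1$ of Theorem~\ref{thm:d3}, and then to rewrite the sum $\sum b_ir_i$ as a quadratic form in $\boldsymbol{r}$.

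First, set $n_i=1$ for every $i$. Then each topological surgery coefficient is $\frac{p_i}{q_i}=\pm1+tb_i$ with $q_i=1$. The generalized linking matrix $Q$ becomes the ordinary linking matrix: $tb_i\pm1$ on the diagonal and $l_{ij}$ off the diagonal. In particular $Q$ is an integral \emph{symmetric} matrix, so its signature $\sigma(Q)$ has its usual meaning. The hypothesis that $e(\xi)$ is torsion gives, by part 2 of Theorem~\ref{thm:d3}, a rational solution $\boldsymbol{b}$ of $Q\boldsymbol{b}=\boldsymbol{r}$. Substituting $n_i=1$, the term $(3-n_i)\operatorname{sign}_i$ becomes $2\operatorname{sign}_i$. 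This is exactly the first expression
$$d_3(\xi)=\tfrac14\Bigl(\sum_i b_ir_i+2\operatorname{sign}_i\Bigr)-\tfrac34\sigma(Q)-\tfrac12 .$$

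Next, I would rewrite $\sum_i b_ir_i=\boldsymbol{r}^T\boldsymbol{b}$.
\begin{itemize}
\item If $Q$ is invertible, then $\boldsymbol{b}=Q^{-1}\boldsymbol{r}$, so this sum equals $\boldsymbol{r}^TQ^{-1}\boldsymbol{r}$. Splitting off $\frac14\cdot 2\sum\operatorname{sign}_i=\frac12\sum\operatorname{sign}_i$ then gives the second expression.
\item If $Q$ is singular, I read $\boldsymbol{r}^TQ^{-1}\boldsymbol{r}$ as $\boldsymbol{r}^T\boldsymbol{b}$ for any solution $\boldsymbol{b}$. This is well defined: if $Q\boldsymbol{b}=Q\boldsymbol{b}'=\boldsymbol{r}$, then symmetry of $Q$ gives $\boldsymbol{r}^T\boldsymbol{b}'=\boldsymbol{b}^TQ\boldsymbol{b}'=\boldsymbol{b}^T\boldsymbol{r}=\boldsymbol{r}^T\boldsymbol{b}$.
\end{itemize}

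The only step needing care is this symmetry and independence-of-choice check, and it is routine. If instead one wanted a proof independent of Theorem~\ref{thm:d3}, I would follow Ding--Geiges--Stipsicz directly. Build the 4-dimensional handlebody $X$ given by attaching 2-handles along $L_1,\dots,L_k$ with framings $tb_i\pm1$. Equip it with the almost complex structure induced from the Stein structure on the $(-1)$-handles, extended over the $(+1)$-handles after blowing up. Evaluate $c_1$ on the handles via rotation numbers, so that $c^2=\boldsymbol{r}^TQ^{-1}\boldsymbol{r}$. Then apply Gompf's formula
$$d_3=\tfrac14\bigl(c^2-3\sigma(X)-2\chi(X)\bigr)+q,$$
where $q$ counts the $(+1)$-surgeries. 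Keeping track of the correction $q$ and $\chi(X)=1+k$ is what produces the $\frac12\sum\operatorname{sign}_i-\frac12$ term, and this bookkeeping would be the main obstacle on that route.
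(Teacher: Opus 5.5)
Your main route is correct and matches how the paper treats this result: you specialize Theorem~\ref{thm:d3} to $n_i=1$, note that $Q$ becomes the symmetric linking matrix, and rewrite $\sum_i b_ir_i=\boldsymbol{r}^TQ^{-1}\boldsymbol{r}$, with a valid well-definedness check when $Q$ is singular; the paper does not prove the statement but cites it from Ding--Geiges--Stipsicz, remarking only that Theorem~\ref{thm:d3} generalizes it and that $Q$ is symmetric here. Keep in mind that Theorem~\ref{thm:d3} is itself obtained from this formula (the paper says it ``stems from'' Ding--Geiges--Stipsicz), so the specialization is a consistency check rather than an independent proof; the real argument is your Gompf-invariant route, where the bookkeeping is immediate because $-\tfrac{1}{2}\chi(X)+q=-\tfrac{1+k}{2}+q=\tfrac12\sum_i\operatorname{sign}_i-\tfrac12$.
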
 

Note that in this special case, $Q$ is a symmetric matrix.
\bigskip


\section{Characterizing slopes for any Legendrian knot}
In this section, we prove the main result.
\begin{proof}[Proof of Theorem~\ref{Thm:anyknot}]
Suppose that contact $\frac{p}{q}$-surgeries along Legendrian knots $L$ and $L'$ in $(S^3,\xi_{st})$ yield contactomorphic manifolds, where $L$ is a Legendrian representative of $K$ and $L'$ is any Legendrian knot. Let $tb(L)=t$, $rot(L)=r$, $tb(L')=t'$ and $rot(L')=r'$. Since we only consider unoriented Legendrian knots, we assume that both $r$ and $r'$ are non-negative.

First, assume $\frac{p}{q}+tb(L)=0$. That is, the contact $(-t)$-surgeries along $L$ and $L'$ are contactomorphic. The first homology of $L(-t)$ is $\mathbb{Z}$, so the smooth surgery slope on $L'$ is $0$ and $t'=t$. So $L'$ is smoothly isotopic to $L$. Using Theorem~\ref{thm:d3} and the algorithm of Ding, Geiges and Stipsicz, the contact $(-t)$-surgery can be transformed to contact $\pm\frac{1}{n}$ surgeries. See the proof of Lemma~\ref{Lem:d3anyknotq=1}.  By Theorem~\ref{thm:d3}, we compute the Euler classes of the contact structures in $L(-t)$ and $L'(-t)$,  identified with elements in $H^2(S^3_0(L);\mathbb{Z})=\mathbb{Z}$ generated by the Poincaré-dual of the meridian:
    $$e(L(-t))=\left\{
    \begin{aligned}
       & \{r-t+1+2i:0\le i\le t-1\}, t\ge1,\\
    &r(t^2+t+1)\pm(t+1)^2, t\le-1,
    \end{aligned}
    \right.$$ $$
    e(L'(-t))=\left\{
    \begin{aligned}
       &\{r'-t+1+2i:0\le i\le t-1\}, t\ge1,\\
    &r'(t^2+t+1)\pm(t+1)^2, t\le-1.
    \end{aligned}
    \right.
    $$
Since $L(-t)$ and $L'(-t)$ have the same Euler classes,  we have $r=r'$.
\bigskip

Now we assume $\frac{p}{q}\neq -t$. The first homology groups of $L(\frac{p}{q})$ and $L'(\frac{p}{q})$ are $\mathbb{Z}_{p+qt}$ and $\mathbb{Z}_{p+qt'}$, respectively. These manifolds are homeomorphic only if either $t=t'$ or $t+t'=-\frac{2p}{q}$.
\bigskip

\subsection{Suppose $t'=t$.}\label{subs:anyknot1} Then their smooth surgery slopes must also be equal, i.e., $t+\frac{p}{q}=t'+\frac{p}{q}$. Since $\frac{p}{q}+t$ is assumed to be a characterizing slope for $K$, it follows that $L'$ is smoothly isotopic to $L$.  In what follows, we compute the $d_3$-invariant to prove $r=r'$, and consequently that $L$ and $L'$ are Legendrian isotopic in $(S^3, \xi_{st})$.

The contact $\frac{p}{q}$-surgery along $L$, $L(\frac{p}{q})$, corresponds to finitely many contact structures,  each of which can be obtained by converting the contact $\frac{p}{q}$-surgery diagram to a contact $(\pm1)$-surgery along a Legendrian link $L_1\sqcup L_2\sqcup\ldots\sqcup L_k$ according to the algorithm of Ding, Geiges and Stipsicz.   Here $L_{i+1}$ is either a Legendrian push-off or a stabilization of $L_i$. So each $L_i$ can be obtained from $L$  by Legendrian push-offs and stabilizations. Similarly, each contact structure in $L'(\frac{p}{q})$ can be represented by a contact $(\pm1)$-surgery along a Legendrian link $L'_1\sqcup L'_2\sqcup\ldots\sqcup L'_k$. Each $L'_i$ can be obtained from $L'$  by Legendrian push-offs and stabilizations. Note that $tb(L_i)=tb(L'_i)$, and the contact surgery coefficients of $L_i$ and $L'_i$ are the same. Let $r_i$ and $r'_i$ denote the rotation numbers of $L_i$ and $L'_i$, respectively, for $i=1,\dots,k$, and set $\boldsymbol{r}=(r_1,\dots,r_k)^T$ and $\boldsymbol{r}'=(r'_1,\dots,r'_k)^T$. Note that  $r_i-r$ ($i=1, 2, \ldots,k$) is a constant determined solely by the signs of stabilizations needed to obtain $L_i$ from $L$. There is a bijection between the expanded contact $(\pm1)$-surgeries for $L(\frac{p}{q})$ and those for $L'(\frac{p}{q})$: an expanded contact $(\pm1)$-surgery along $L_1\sqcup L_2\sqcup\ldots\sqcup L_k$ corresponds to an expanded contact $(\pm1)$-surgery along $L'_1\sqcup L'_2\sqcup\ldots\sqcup L'_k$ if and only if $r_i-r=r'_i-r'$ for each $i$. 

Suppose the contact structures in $L(\frac{p}{q})$ and $L'(\frac{p}{q})$ are contactomorphic. Then there exists a bijection between the sets of $d_3$-invariants of contact structures in $L(\frac{p}{q})$ and $L'(\frac{p}{q})$. However, the explicit correspondence is unknown. Therefore, we consider the sums of the $d_3$-invariants, denoted by $D_3(L, \frac{p}{q})$ and $D_3(L', \frac{p}{q})$.

By Theorem~\ref{thm:d3s},  for a contact $(\pm1)$-surgery along a Legendrian link $L_1\sqcup L_2\sqcup\ldots\sqcup L_k$, the $d_3$-invariant is given by  $$d_3=\frac{1}{4}\boldsymbol{r}^{T}Q^{-1}\boldsymbol{r}+\frac{1}{2}\sum\limits_{i=1}^{k}\operatorname{sign}_{i}-\frac{3}{4}\sigma(Q)-\frac{1}{2},$$
where $Q$ is the symmetric linking matrix, and $\operatorname{sign}_i$ denotes the sign of the contact surgery coefficient of $L_i$. This invariant can be viewed as a quadratic polynomial in the rotation number $r$ of $L$ with $p,q,t$ treated as constants. 
The quadratic term of $d_3$ is $$\frac{1}{4}(r,r,\ldots, r)Q^{-1}(r,r,\ldots, r)^{T}.$$ 
The linear term of $d_3$ is $$\frac{1}{2}(r_1-r,r_2-r,\ldots, r_k-r)Q^{-1}(r,r,\ldots, r)^{T}.$$ 
The constant term of $d_3$ is $$\frac{1}{4}(r_1-r,r_2-r,\ldots, r_k-r)Q^{-1}(r_1-r,r_2-r,\ldots, r_k-r)^{T}+\frac{1}{2}\sum\limits_{i=1}^{k}\operatorname{sign}_{i}-\frac{3}{4}\sigma(Q)-\frac{1}{2}.$$
When summing the $d_3$-invariants over all contact structures in $L(\frac{p}{q})$, we must exhaust all possible types of expanded contact $(\pm1)$-surgery diagrams. We observe that the linear term in $D_3(L, \frac{p}{q})$ vanishes, since reversing the signs of all stabilizations from $L$ to each $L_i$ yields a contact surgery diagram for which $\tilde{r}_i-r=-(r_i-r)$ for every $i\in\{1,\dots,k\}$.

For a contact $(\pm1)$-surgery along a Legendrian link $L'_1\sqcup L'_2\sqcup\cdots\sqcup L'_k$, since $t=t'$, the linking matrix remains $Q$, and the $d_3$-invariant is given by
$$d_3=\frac{1}{4}\boldsymbol{r}'^{T}Q^{-1}\boldsymbol{r}'+\frac{1}{2}\sum_{i=1}^{k}\operatorname{sign}_i-\frac{3}{4}\sigma(Q)-\frac{1}{2},$$
where $\operatorname{sign}_i$ denotes the sign of the contact surgery coefficient of $L'_i$. By the same reasoning as before, the linear term in $D_3(L',\frac{p}{q})$ also vanishes. 
Clearly, the constant terms of $D_3(L,\frac{p}{q})$ and $D_3(L',\frac{p}{q})$ coincide, and the leading coefficients of these two sums are also equal. Therefore, the equation $D_3(L,\frac{p}{q})=D_3(L',\frac{p}{q})$ admits a unique non-negative solution, namely $r'=r$. 
\bigskip

\subsection{Suppose $t+t'=-\frac{2p}{q}$.}\label{subs:trefoil2} Then either $q=1$ or $q=2$. By computing the $d_3$-invariants, we have the following two lemmas.

\begin{lemma}\label{Lem:d3anyknotq=1}
Assume $q=1$. If $6|t+p|$ cannot be expressed as a sum of two squares, then there exists no Legendrian knot $L'$ such that the contact $p$-surgeries along $L$ and along $L'$ yield contactomorphic contact manifolds.
\end{lemma}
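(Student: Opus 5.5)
Throughout, set $n=p+t$. Since $q=1$ and $t+t'=-2p$, we have $p+t'=-(p+t)=-n$, so the underlying manifolds are $S^3_{n}(K)$ and $S^3_{-n}(K')$ with $n\neq 0$. I will assume the contact collections $L(p)$ and $L'(p)$ are contactomorphic and aim to show that $6|n|$ is a sum of two squares. This proves the lemma by contraposition.

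\textbf{Step 1: reduce to contact $(\pm1)$-surgeries.} I would convert the contact $p$-surgeries on $L$ and on $L'$ into contact $(\pm\frac{1}{m})$-surgeries using the Ding--Geiges--Stipsicz algorithm. There are three cases.
\begin{enumerate}[(a)]
\item If $p<0$, one stabilized copy of $L$ carries contact $(-1)$-surgery, with $|p+1|$ stabilizations distributed among the signs. This gives $|p|$ contact structures, whose rotation numbers are $r-p-1+2i$ for $0\le i\le |p|-1$.
\item If $p>0$, the configuration is $(p-1)$ push-offs with $(+1)$-surgery plus one further stabilized copy. Alternatively, write it as a $(+\frac{1}{m})$-surgery followed by a $(-1)$-surgery on a stabilized push-off.
\item If $p=1$, there is a single $(+1)$-surgery.
\end{enumerate}
The same description applies to $L'$, with $t'$ in place of $t$ and the same number of structures. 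This matches the counts already listed for the Euler classes in the case $p+t=0$.

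\textbf{Step 2: compute the $d_3$-invariants via Theorem~\ref{thm:d3}.} In each case the generalized linking matrix $Q$ is small, and after row reduction it is essentially the $1\times1$ matrix $(n)$. I therefore expect each structure to satisfy
$$d_3=\frac{\rho^2}{4n}+c(p,n),$$
where $\rho$ is an effective rotation number and $c(p,n)$ collects the signature and sign terms. Here $\rho$ runs over an arithmetic progression $\rho_i=\rho_0+2i$ with $\rho_0\equiv r$ or $r\pm(\text{const})$. On the $L'$ side the slope is $-n$, so the formula becomes
$$d_3'=-\frac{\rho'^2}{4n}+c'(p,-n),$$
with $\sigma(Q)$ of the opposite sign. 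Since the explicit bijection between structures is unknown, I would equate the sums $D_3(L,p)=D_3(L',p)$, as in \S\ref{subs:anyknot1}.

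\textbf{Step 3: extract the arithmetic condition.} The sums of $\rho_i^2$ over an arithmetic progression of length $|p|$ involve $\sum i^2=\frac{(|p|-1)|p|(2|p|-1)}{6}$. This is where the factor $6$ should enter. Clearing denominators, the equation $D_3(L,p)=D_3(L',p)$ becomes an identity of the shape
$$|p|\left(\rho_*^2+\rho_*'^2\right)+(\text{explicit polynomial in }p)=6|n|\cdot(\text{integer}),$$
which I would try to massage into $6|n|=X^2+Y^2$ with $X,Y$ integers built from $r$, $r'$ and $p$. The cross terms from the progression should cancel by the sign-reversal symmetry, so only the squares of the centres $r$, $r'$ plus a constant survive.

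Where the direct sum identity does not produce exactly this form, I would fall back on a single structure. By Proposition~\ref{Prop:surcharanyknot}, the $d$-invariant matching already forces $n$ to have no prime factor $\equiv 3 \pmod 4$. The remaining task is then to control the factor $6$ using Theorem~\ref{notwosquares}.

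\textbf{Main obstacle.} I expect the hardest step to be the bookkeeping of the constant terms, namely $\sigma(Q)$, $\sum\operatorname{sign}_i$, and the $(3-n_i)$ factors in Theorem~\ref{thm:d3}, for the $p>0$ expansion. The goal is to make the residual constant combine into precisely $6|n|$ rather than a multiple of it. I would first check the simplest cases $p=\pm1$ by hand to pin down the normalization before treating general $p$.
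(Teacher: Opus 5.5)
There is a genuine gap in Steps 2--3. Equating the sums $D_3(L,p)=D_3(L',p)$ does not yield the two-squares condition, and the factor $6$ does not come from $\sum i^2$.

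The summing trick of \S\ref{subs:anyknot1} works only because there $t=t'$, so both sides share the same $Q$ and the same constant terms. Here $t'=-2p-t$, and $\det Q$, $\det Q'$ have opposite signs.

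Concretely, take $p\ge 2$ and $n>0$. Each collection has two elements, with
\[
d_3(\xi)=\frac{\rho^2}{4n}-\frac34,\qquad d_3(\xi')=-\frac{\rho'^2}{4n}+\frac34,\qquad \rho=r\pm(p-1),\quad \rho'=r'\pm(p-1).
\]
Summing gives only $r^2+r'^2+2(p-1)^2=6n$, which does not force $6n$ to be a sum of two squares. For example, $p=3$, $n=7$, $r=5$, $r'=3$ satisfies it, but $42$ is not a sum of two squares.

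The $6$ actually comes from the signature term. Here $\sigma(Q)=0$ and $\sigma(Q')=-2$, while all other constant contributions coincide on the two sides, so the constants differ by $\frac34\cdot 2=\frac32$.

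The missing idea is to compare a single pair rather than sums. A contactomorphism of the collections matches some $\xi\in L(p)$ with some $\xi'\in L'(p)$, so $d_3(\xi)=d_3(\xi')$. This reads $\rho^2+\rho'^2=6|n|$ immediately. Because the identity has the same form for every choice of stabilization signs, the unknown bijection never matters.

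The cases $p=\pm1$ work the same way. The case $p\le -2$ reduces to $p=-1$ applied to the $(-p-1)$-fold stabilization of $L$, whose $tb$ is $t+p+1$.

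You do mention falling back on a single structure, which is the right instinct. However, you then pivot to Proposition~\ref{Prop:surcharanyknot}, and that points the wrong way. Knowing that $n$ has no prime factor $\equiv 3\pmod 4$ gives, via Theorem~\ref{notwosquares}, that $6|n|$ is \emph{not} a sum of two squares. That is the contradiction drawn afterwards in the proof of Theorem~\ref{Thm:anyknot}, by combining the proposition with this lemma. It cannot supply the lemma's own implication, which has to come from the $d_3$ computation.

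Step 1 also needs fixing in two places.
\begin{itemize}
\item For $p\ge2$, the algorithm with $k=1$ gives contact $(+1)$-surgery on $L$ and contact $(-\frac{1}{p-1})$-surgery on a once-stabilized push-off. So there are two structures, not an arithmetic progression of length $|p|$.
\item For $p<0$, the rotation numbers are $r+p+1+2i$, not $r-p-1+2i$.
\end{itemize}
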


\begin{proof} Note that $t+t'=-2p$ and $p$ is a non-zero integer. We divide the proof into four cases.

\textbf{Case 1}, $p\ge 2$. According to \cite{DingGeiges2004} and \cite{ding2004surgery}, the contact $p$-surgery along $L$ or $L'$ is equivalent to the surgery as shown in Figure~\ref{fig:+p}, whose surgery coefficients are $+1$ and $-\frac{1}{p-1}$.

\begin{figure}[htb]
\begin{overpic}
[scale=0.25]
{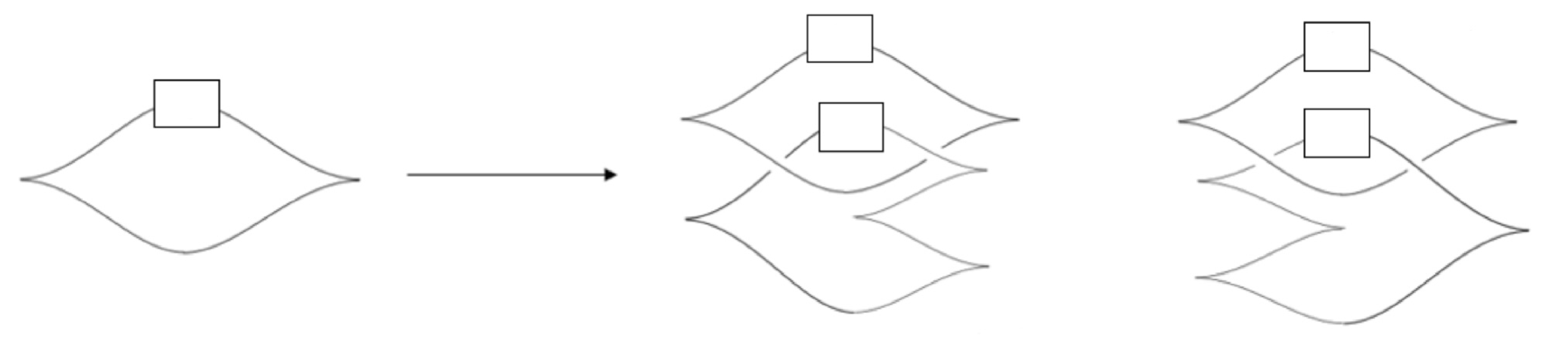} 
\put(56, 73){$L$}
\put(272, 67){$L$}
\put(269, 95){$L$}
\put(430, 65){$L$}
\put(430, 93){$L$}
\put(90, 70){$p$}
\put(300, 90){$+1$}
\put(300, 10){$-\frac{1}{p-1}$}
\put(470, 90){$+1$}
\put(470, 10){$-\frac{1}{p-1}$}
\end{overpic}
\caption{Contact $p$-surgery along $L$, where $p\ge2$.}
\label{fig:+p}
\end{figure}

As to $L$, 
         $$\boldsymbol{r}=(r,r\pm 1)^{T},~Q=
\left(
\begin{matrix}
t+1&tp-t\\
t&tp-t-p
\end{matrix}
\right),~\boldsymbol{b}=(b_1,b_2)^T,$$

$$b_1=\frac{pr\pm pt\mp t}{t+p},~ b_2=\frac{-r\mp t\mp 1}{t+p}.$$

As to $L'$,
$$\boldsymbol{r'}=(r',r'\pm 1)^{T},~Q'=
\left(
\begin{matrix}
-2p-t+1&-2p^2-pt+2p+t\\
-2p-t&-2p^2-pt+p+t
\end{matrix}
\right),~\boldsymbol{b}'=(b'_1, b'_2)^T,$$

$$b'_1=\frac{-pr'\pm (2p+t)p\mp(2p+t)}{p+t},~
b'_2=\frac{r'\mp (2p+t)\pm 1}{p+t}.$$

If $t+p>0$,
$$\sigma(Q)=0,~\sigma (Q')=-2.$$ 
Then
$$d_3(\xi)=\frac{[r\pm (p-1)]^2}{4(t+p)}-\frac{3}{4},~d_3(\xi')=\frac{-[r'\pm (p-1)]^2}{4(t+p)}+\frac{3}{4}.$$
        
If $t+p<0$, 
$$\sigma (Q)=-2,~\sigma (Q')=0.$$ 
Then 
$$d_3(\xi)=\frac{[r\pm (p-1)]^2}{4(t+p)}+\frac{3}{4},~d_3(\xi')=\frac{-[r'\pm (p-1)]^2}{4(t+p)}-\frac{3}{4}.$$

In the above equations, $\pm$ correspond to the two different stabilizations as shown in Figure~\ref{fig:+p}. So in any cases, $d_3(\xi)=d_3(\xi')$ if and only if 
$$[r\pm (p-1)]^2+[r'\pm (p-1)]^2=6|t+p|.$$
Regardless of how the signs are chosen, $6|t+p|$ can be written as the sum of two squares.

\textbf{Case 2}, $p=1$. By the assumption, $t\neq -1$. 
If $t<-1$, $$Q=t+1<0,~\sigma(Q)=-1,~b=\frac{r}{t+1}; ~Q'=-t-1>0,~\sigma(Q')=+1,~b=-\frac{r'}{t+1},$$
then
$$d_3(\xi)=\frac{r^2}{4(1+t)}+\frac{3}{4}, ~d_3(\xi')=\frac{-r'^2}{4(1+t)}-\frac{3}{4}.$$
If $t>-1$, $$Q=t+1>0, ~\sigma(Q)=1, ~b=\frac{r}{t+1}; ~Q'=-t-1<0, ~\sigma(Q')=-1, ~b=-\frac{r'}{t+1},$$
then
$$d_3(\xi)=\frac{r^2}{4(1+t)}-\frac{3}{4},~d_3(\xi')=\frac{-r'^2}{4(1+t)}+\frac{3}{4}.$$
In the both cases, it can be deduced from $d_3(\xi)=d_3(\xi')$ that
$$r^2+r'^2=6|1-t|.$$

\textbf{Case 3},  $p=-1$. Then $t\neq 1$ and $t'=2-t$. If $t\le0$,
$$Q=t-1>0, ~\sigma(Q)=1, ~b=\frac{r}{t-1}; ~Q'=1-t<0, ~\sigma(Q')=-1, ~b'=\frac{r'}{1-t},$$
then
$$d_3(\xi)=\frac{-r^2}{4(1-t)}-\frac{7}{4}, ~
d_3(\xi')=\frac{r'^2}{4(1-t)}-\frac{1}{4}.$$
If $t\ge2$,
$$Q=t-1<0, ~\sigma(Q)=-1, ~b=\frac{r}{t-1}; ~Q'=1-t>0, ~\sigma(Q')=1, ~b'=\frac{r'}{1-t},$$
then
$$d_3(\xi)=\frac{-r^2}{4(1-t)}-\frac{1}{4}, ~
d_3(\xi')=\frac{r'^2}{4(1-t)}-\frac{7}{4}.$$
For either $t\leq0$ or $t\geq2$, it can be deduced from $d_3(\xi)=d_3(\xi')$ that
$$r^2+r'^2=6|1-t|.$$

\textbf{Case 4}, $p\le-2$. The contact $p$-surgery along $L$ (or $L'$) is equivalent to the contact $(-1)$-surgery along the $-p-1$ stabilizations of $L$ (or $L'$), whose Thurston-Bennequin invariant is $t+p+1$. By Case 3, $6|1-(t+p+1)|=6|t+p|$ is a sum of two squares. 
\end{proof}

\begin{lemma}\label{Lem:d3anyknotq=2}
Assume $q=2$. If $3|2t+p|$ cannot be expressed as a sum of two squares, then there exists no Legendrian knot $L'$ such that the contact $\frac{p}{2}$-surgeries along $L$ and along $L'$ yield contactomorphic contact manifolds.
\end{lemma}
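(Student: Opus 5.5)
The plan mirrors the proof of Lemma~\ref{Lem:d3anyknotq=1}. Since $q=2$, the condition $t+t'=-\frac{2p}{q}$ reads $t'=-p-t$, and $p$ is odd. The smooth slopes are $t+\frac p2$ and $t'+\frac p2=-(t+\frac p2)$, so $2t+p\neq 0$. Also $H_1$ is finite, so every contact structure involved has torsion Euler class and Theorem~\ref{thm:d3s} applies. I will express each contact structure in $L(\frac p2)$ and $L'(\frac p2)$ as a contact $(\pm1)$-surgery on a two-component link. Then I compute $d_3$ for a structure $\xi$ in $L(\frac p2)$ and a structure $\xi'$ in $L'(\frac p2)$, as explicit functions of $t$ and of $r,r'$ shifted by the stabilization signs. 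Finally I show that $d_3(\xi)=d_3(\xi')$ forces an identity $A^2+B^2=3|2t+p|$ with integers $A,B$. Contactomorphic collections must share at least one $d_3$-value, so a contactomorphism would give such an identity, contradicting the hypothesis.

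\textbf{Reduction to $p=-1$ for negative $p$.} Write $p=-(2m+1)$ with $m\ge 0$, and let $S^m(L)$ be $L$ stabilized $m$ times, with $tb=t-m$. The smooth slope $t+\frac p2$ equals $(t-m)-\frac12$, so by the Ding--Geiges--Stipsicz algorithm ($a_1=-2-m$, $a_2=-2$) contact $\frac p2$-surgery on $L$ is contact $(-\frac12)$-surgery on $S^m(L)$. The same holds for $L'$ with $t'-m$, and $(t-m)+(t'-m)=1$ is again the $p=-1$ relation. For $p=-1$, contact $(-\frac12)$-surgery is $(-1)$-surgery on $L$ and a Legendrian push-off, by the lemma of \cite{DiGe01}. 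The linking matrix is
\[
Q=\begin{pmatrix} t-1 & t\\ t & t-1\end{pmatrix},
\]
with $\det Q=1-2t$. For $L'$, replace $t$ by $1-t$, so $Q'$ has $\det Q'=2t-1$. With $\boldsymbol r=(r,r)^T$, one gets $\boldsymbol r^TQ^{-1}\boldsymbol r=\frac{2r^2}{2t-1}$, and similarly for $r'$ with the opposite sign. The signatures are $\sigma(Q)$ and $\sigma(Q')$, taken from $\{0,\pm2\}$ according to the sign of $2t-1$ and the trace. Equating the two $d_3$ values should give
\[
r^2+r'^2=3|2t-1|,
\]
where the constant $3$ comes from the difference of the $\frac34\sigma$ terms, exactly as in Cases 2 and 3 of Lemma~\ref{Lem:d3anyknotq=1}. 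Substituting $t\mapsto t-m$ gives $3|2t+p|$ in general, with $r,r'$ replaced by the rotation numbers of the stabilized knots.

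\textbf{Positive $p$.} For $p=1$, contact $\frac12$-surgery is $(+1)$-surgery on $L$ and a push-off. The same computation with $+$ signs yields $r^2+r'^2=3|2t+1|$. For $p\ge3$, I take $k=1$ in step (2) of the algorithm: contact $(+1)$-surgery on $L$ together with contact $\frac{p}{2-p}$-surgery on a push-off. Here $\frac{p}{2-p}=-1-\frac{2}{p-2}$ has continued fraction $a_1=-\frac{p+1}{2}$, $a_2=-2$. So the diagram is a $(+1)$-surgery on $L$ plus $(-1)$-surgeries on two further components, each obtained from a push-off of $L$ by $\frac{p-3}{2}$ stabilizations. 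This is a three-component link, so I would rather repackage it as a two-component computation using Theorem~\ref{thm:d3}: a $(+1)$-surgery on $L$ and a $(-\frac12)$-surgery on the stabilized push-off. I then compute $\boldsymbol b$ and $\sigma(Q)$ directly, as in Case 1 of Lemma~\ref{Lem:d3anyknotq=1}. The expected outcome is
\[
\bigl[r\pm\tfrac{p-1}{2}\cdot(\text{const})\bigr]^2+\bigl[r'\pm\cdots\bigr]^2=3|2t+p|,
\]
which is again a sum of two integer squares.

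\textbf{Main obstacle.} The hard part is the $p\ge3$ case: getting the signatures of $Q$ and $Q'$ right in both regimes $2t+p>0$ and $2t+p<0$. The signature is what produces the constant $3$, and the cross terms must combine so that the quadratic part becomes a sum of two perfect squares over the common denominator $2t+p$. If the generalized $2\times2$ matrix proves unwieldy, I would fall back on the symmetric $3\times3$ matrix and diagonalize it by hand. Everything else is a routine linear-algebra computation.
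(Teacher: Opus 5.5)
\textbf{There is a genuine gap in the case $p\ge 3$.} Your treatment of $p=1$ and $p\le -1$ is sound and matches Cases 3 and 4 of the paper. There every stabilization choice gives the same constant term in $d_3$, so any single coincidence $d_3(\xi)=d_3(\xi')$ does force $3|2t+p|$ to be a sum of two squares. The case $p\ge3$ fails on two levels.

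\textbf{The diagram is wrong.} Your expansion of $\frac{p}{2-p}$ is incorrect: $a_1+1-\frac{1}{a_2}$ with $a_1=-\frac{p+1}{2}$, $a_2=-2$ equals $\frac{2-p}{2}$, not $\frac{p}{2-p}$.
\begin{itemize}
\item For $p=3$ one has $\tilde r=-3$. The diagram is $(+1)$ on $L$ and $(-1)$ on a twice-stabilized push-off, giving three stabilization patterns.
\item For $p\ge5$ the coefficients are $a_1=-3$, then $\frac{p-5}{2}$ entries $-2$, then $-3$. This gives $(+1)$ on $L$, $(-\frac{2}{p-3})$ on a once-stabilized push-off $L_1$, and $(-1)$ on a once-stabilized push-off of $L_1$.
\end{itemize}
So for $p\ge5$ you cannot repackage into two components. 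There are four rotation vectors, $(r,r\pm1,r\pm2)$ and $(r,r\pm1,r)$, and the paper uses the $3\times3$ generalized linking matrix with $\det Q=p+2t$.

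\textbf{The expected identity is false.} Even with the correct diagram, a single coincidence does not always give $3|2t+p|$ as a sum of two squares. Write $N=p+2t>0$, $\alpha=\frac{p-1}{2}$, $\beta=\frac{p-3}{2}$. The $d_3$-values are
\begin{itemize}
\item for $L$: $\frac{(r\mp\alpha)^2}{2N}-1$ and $\frac{(r\mp\beta)^2}{2N}-\frac12$;
\item for $L'$: $-\frac{(r'\mp\alpha)^2}{2N}+\frac12$ and $-\frac{(r'\mp\beta)^2}{2N}+1$;
\end{itemize}
and similarly for $N<0$. Equating two values of the same type gives $3N$ as a sum of two squares. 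Equating an $\alpha$-type value with a $\beta$-type value instead gives $(r\mp\alpha)^2+(r'\mp\beta)^2=4N$ or $(r\mp\beta)^2+(r'\mp\alpha)^2=2N$. Your hypothesis does not exclude these: for $N=5$, $15$ is not a sum of two squares but $20$ and $10$ are. So ``the collections share one $d_3$-value'' is not enough.

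\textbf{How the paper closes this.} It uses the whole bijection. Same-type matches are excluded by the hypothesis. Hence the $\alpha$-type structures of $L$ must correspond to the $\beta$-type structures of $L'$, and vice versa. Summing the $d_3$'s within each pair makes $r^2+r'^2+\alpha^2+\beta^2$ equal to both $4|N|$ and $2|N|$, a contradiction. For $p=3$ there are two $\alpha$-type and one $\beta$-type structure on each side, so such a matching cannot even exist.

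\textbf{An alternative repair.} You could keep your single-match strategy by adding the parity constraint $tb+rot\equiv1\pmod 2$, i.e.\ $r\equiv t+1$ and $r'\equiv t\pmod 2$.
\begin{itemize}
\item In each cross-type identity $A^2+B^2\in\{4|N|,2|N|\}$ the two bases then satisfy $A\equiv B\pmod 2$.
\item Reducing mod $4$ or $8$ forces $A,B$ both even in the $4|N|$ case and both odd in the $2|N|$ case.
\item That pins down the parity of $t$, which gives $|N|\equiv3\pmod4$.
\item But then $|N|$ itself would be a sum of two squares, namely $(A/2)^2+(B/2)^2$ or $\bigl(\frac{A+B}{2}\bigr)^2+\bigl(\frac{A-B}{2}\bigr)^2$, which is impossible.
\end{itemize}
Either way, this step is missing from your plan.
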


\begin{proof} Note that $t+t'=-p$ and $p$ is an odd integer. We divide the proof into four cases.


\begin{figure}[htb]
\begin{overpic}
[scale=0.5]
{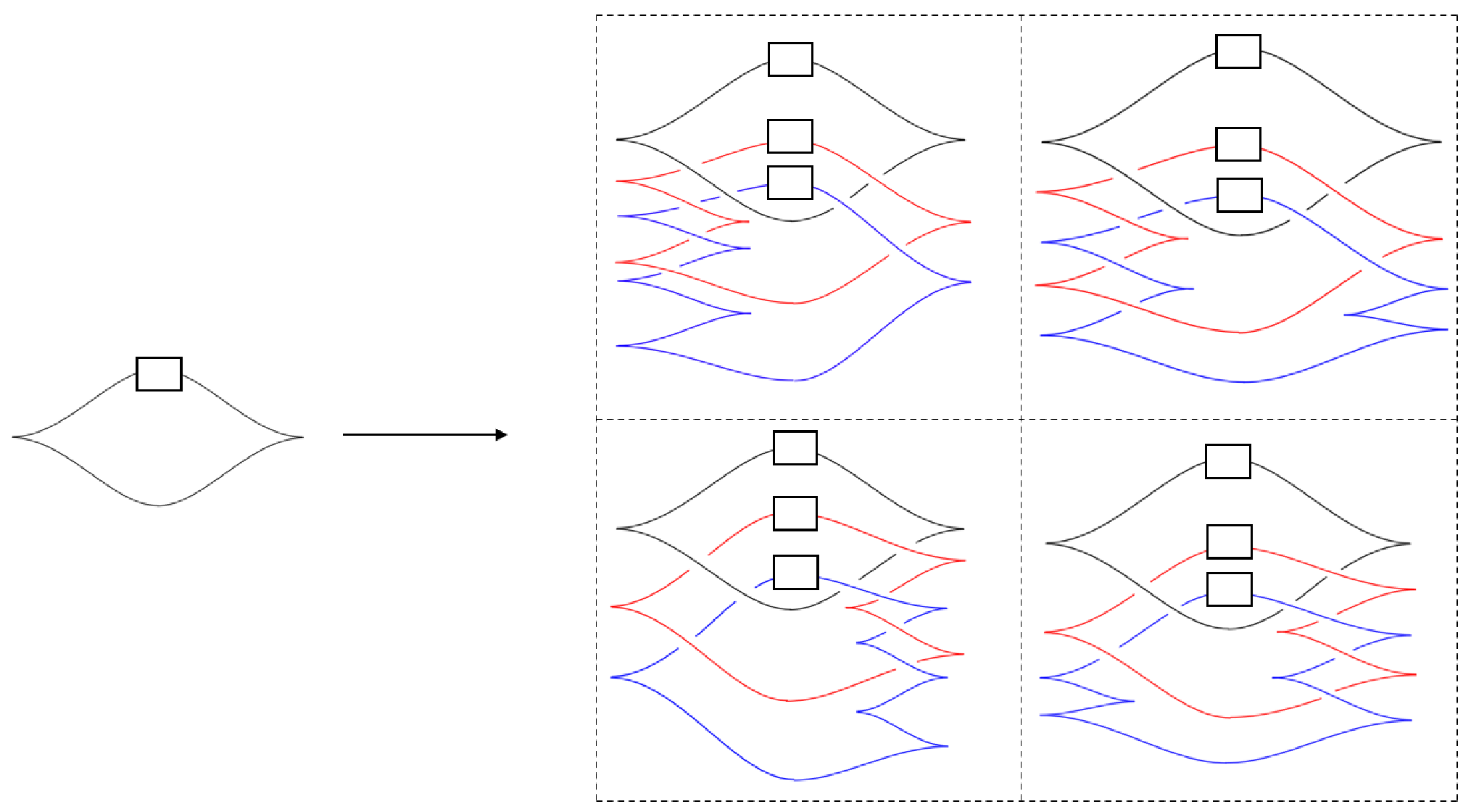} 
\put(49, 142){$L$}
\put(259, 246){$L$}
\put(259, 221){$L$}
\put(259, 205){$L$}
\put(260, 117){$L$}
\put(260, 95){$L$}
\put(260, 75){$L$}
\put(408, 249){$L$}
\put(408, 218){$L$}
\put(408, 201){$L$}
\put(405, 113){$L$}
\put(405, 86){$L$}
\put(405, 70){$L$}
\put(80, 140){$\frac{p}{2}$}
\put(310, 230){$+1$}
\put(310, 156){$-1$}
\put(310, 98){$+1$}
\put(310, 30){$-1$}
\put(465, 230){$+1$}
\put(455, 144){$-1$}
\put(460, 95){$+1$}
\put(455, 19){$-1$}
\put(310, 205){$-\frac{1}{\frac{p-3}{2}}$}
\put(310, 73){$-\frac{1}{\frac{p-3}{2}}$}
\put(455, 203){$-\frac{1}{\frac{p-3}{2}}$}
\put(456, 66){$-\frac{1}{\frac{p-3}{2}}$}
\put(200,134){$(1)$}
\put(340,134){$(2)$}
\put(200,7){$(3)$}
\put(340,7){$(4)$}
\end{overpic}
\caption{Contact $\frac{p}{2}$-surgery along $L$, where $p\geq 5$.}
\label{fig:+p;2}
\end{figure}

\textbf{Case 1}, $p\ge5$. Note that $\frac{p}{2-p}$ has the continued fraction expansion 
        $$\frac{p}{2-p}=[\underbrace{-2,\dots ,-2}_{\frac{p-3}{2}},-3].$$
The contact $\frac{p}{2}$-surgery along $L$ (or $L'$) is equivalent to the contact surgeries along the four Legendrian links in Figure~\ref{fig:+p;2}. We compute the $d_3$-invariants using Theorem~\ref{thm:d3}.  

Since $tb(L)=t$ and $tb(L')=-t-p$, the generalized linking matrices are
$$Q=\begin{pmatrix}
	t+1&\frac{pt-3t}{2}&t\\
	t&\frac{pt-3t-p+1}{2}&t-1\\
    t&\frac{pt-3t-p+3}{2}&t-3
	\end{pmatrix}~\text{and}~
    Q'=\begin{pmatrix}
	-t-p+1&\frac{-p^2-pt+3p+3t}{2} &-t-p \\
	-t-p&\frac{-p^2-pt+2p+3t+1}{2} &-t-p-1 \\
    -t-p&\frac{-p^2-pt+2p+3t+3}{2} &-t-p-3 
	\end{pmatrix}.
    $$
    Note that $\det(Q)=p+2t$ and $\det(Q')=-p-2t$, and the eigenvalues of $Q$ and $Q'$ are real.

In the expanded surgery diagrams,  $\boldsymbol{r}$ has the following four cases:
$$\boldsymbol{r}^1=(r,r+1,r+2)^T,~\boldsymbol{r}^2=(r,r+1,r)^T,~\boldsymbol{r}^3=(r,r-1,r-2)^T,~\boldsymbol{r}^4=(r,r-1,r)^T,$$
and  $\boldsymbol{r}'$ has the following four cases:
$$\boldsymbol{r}'^{1}=(r',r'+1,r'+2)^T,~\boldsymbol{r}'^{2}=(r',r'+1,r')^T,~\boldsymbol{r}'^{3}=(r',r'-1,r'-2)^T,~\boldsymbol{r}'^{4}=(r',r'-1,r')^T.$$


We solve the equations $Q\boldsymbol{b}=\boldsymbol{r}$ and $Q'\boldsymbol{b}'=\boldsymbol{r}'$ in each case. 

$$\boldsymbol{b}^1=(\frac{pr+pt-t}{p+2t},-\frac{2r+2t+1}{p+2t},-\frac{r+2t+\frac{p+1}{2}}{p+2t})^T,$$
$$\boldsymbol{b}^2=(\frac{pr+pt-3t}{p+2t},-\frac{2r+2t+3}{p+2t},\frac{-r+\frac{p-3}{2}}{p+2t})^T,$$
$$\boldsymbol{b}^3=(\frac{pr-pt+t}{p+2t},\frac{-2r+2t+1}{p+2t},\frac{-r+2t+\frac{p+1}{2}}{p+2t})^T,$$
$$\boldsymbol{b}^4=(\frac{pr-pt+3t}{p+2t},\frac{-2r+2t+3}{p+2t},\frac{-r-\frac{p-3}{2}}{p+2t})^T;$$

$$\boldsymbol{b}'^{1}=(\frac{p^2-pr'+pt-p-t}{p+2t},\frac{2r'-2p-2t+1}{p+2t},\frac{r'-2t+\frac{1-3p}{2}}{p+2t})^T,$$
$$\boldsymbol{b}'^{2}=(\frac{p^2-pr'+pt-3p-3t}{p+2t},\frac{2r'-2p-2t+3}{p+2t},\frac{r'-\frac{p-3}{2}}{p+2t})^T,$$
$$\boldsymbol{b}'^{3}=(-\frac{p^2+pr'+pt-p-t}{p+2t},\frac{2r'+2p+2t-1}{p+2t},\frac{r'+2t+\frac{3p-1}{2}}{p+2t})^T,$$
$$\boldsymbol{b}'^{4}=(-\frac{p^2+pr'+pt-3p-3t}{p+2t},\frac{2r'+2p+2t-3}{p+2t},\frac{r'+\frac{p-3}{2}}{p+2t})^T.$$

When $p+2t>0$, the determinant $\det(Q)=p+2t>0$, $\det(Q')=-p-2t<0$. So for any $t$, the eigenvalues of $Q$ and $Q'$ are non-zero. From continuity, the sign of the three eigenvalues keep unchanged, and when $p=5,~t=0$, we can obtain that $\sigma(Q)=-1$, $\sigma(Q')=-3$. So for any $p$ and $t$, $\sigma(Q)=-1$, $\sigma(Q')=-3$.  Then the $d_3$-invariants can be computed that 
    $$d_3(\xi^1)=\frac{(2r-p+1)^2}{8(p+2t)}-1, ~d_3(\xi^2)=\frac{(2r-p+3)^2}{8(p+2t)}-\frac{1}{2},$$
    $$d_3(\xi^3)=\frac{(-2r-p+1)^2}{8(p+2t)}-1, ~d_3(\xi^4)=\frac{(-2r-p+3)^2}{8(p+2t)}-\frac{1}{2},$$
    $$d_3(\xi'^1)=-\frac{(2r'-p+1)^2}{8(p+2t)}+\frac{1}{2},~ d_3(\xi'^2)=-\frac{(2r'-p+3)^2}{8(p+2t)}+1,$$
    $$d_3(\xi'^3)=-\frac{(-2r'-p+1)^2}{8(p+2t)}+\frac{1}{2},~ d_3(\xi'^4)=-\frac{(-2r'-p+3)^2}{8(p+2t)}+1.$$
    
When $p+2t<0$, the determinant $\det(Q)=p+2t<0$, $\det(Q')=-p-2t>0$. So for any $t$, the eigenvalues of $Q$ and $Q'$ are non-zero. From continuity, the sign of the three eigenvalues keep unchanged, and when $p=5,~t=-3$, we can obtain that $\sigma(Q)=-3$, $\sigma(Q')=-1$. So for any $p$ and $t$, $\sigma(Q)=-3$, $\sigma(Q')=-1$.
Then the $d_3$-invariants can be computed that 
    $$d_3(\xi^1)=\frac{(2r-p+1)^2}{8(p+2t)}+\frac{1}{2},~ d_3(\xi^2)=\frac{(2r-p+3)^2}{8(p+2t)}+1,$$
    $$d_3(\xi^3)=\frac{(-2r-p+1)^2}{8(p+2t)}+\frac{1}{2},~ d_3(\xi^4)=\frac{(-2r-p+3)^2}{8(p+2t)}+1,$$
    $$d_3(\xi'^1)=-\frac{(2r'-p+1)^2}{8(p+2t)}-1, ~d_3(\xi'^2)=-\frac{(2r'-p+3)^2}{8(p+2t)}-\frac{1}{2},$$
    $$d_3(\xi'^3)=-\frac{(-2r'-p+1)^2}{8(p+2t)}-1,~ d_3(\xi'^4)=-\frac{(-2r'-p+3)^2}{8(p+2t)}-\frac{1}{2}.$$
If the contact manifolds obtained by contact $\frac{p}{2}$-surgery along $L$ and along $L'$ are contactomorphic, then  $$\{d_3(\xi^1), d_3(\xi^2),d_3(\xi^3),d_3(\xi^4)\}=\{d_3(\xi'^1),  d_3(\xi'^2), d_3(\xi'^3), d_3(\xi'^4)\}.$$ For either $p+2t>0$ or $p+2t<0$, since $3|p+2t|$ cannot be expressed as a sum of two squares, Theorem~\ref{notwosquares} implies $12|p+2t|$ is also not a sum of two squares. So the sets $\{d_3(\xi^1), d_3(\xi^3)\}$ and $\{ d_3(\xi'^1), d_3(\xi'^3)\}$ are disjoint, as are $\{d_3(\xi^2), d_3(\xi^4)\}$ and $\{ d_3(\xi'^2), d_3(\xi'^4)\}$.  Thus we have
$$\{ d_3(\xi^1),d_3(\xi^3)\}=\{d_3(\xi'^2),d_3(\xi'^4)\} ~\text{and}~
\{ d_3(\xi^2),d_3(\xi^4)\}=\{d_3(\xi'^1),d_3(\xi'^3)\}.$$
However, $d_3(\xi^1)+d_3(\xi^3)=d_3(\xi'^2)+d_3(\xi'^4)$ means that
$$r^2+r'^2+(\frac{p-1}{2})^2+(\frac{p-3}{2})^2=4|p+2t|,$$
and $d_3(\xi^2)+d_3(\xi^4)=d_3(\xi'^1)+d_3(\xi'^3)$ means that
$$r^2+r'^2+(\frac{p-1}{2})^2+(\frac{p-3}{2})^2=2|p+2t|,$$
which leads to a contradiction. Thus, the contact structures in $L(\frac{p}{2})$ and those in $L'(\frac{p}{2})$ do not have the same $d_3$-invariants, so they are not contactomorphic.

\textbf{Case 2},  $p=3$. The contact $\frac{3}{2}$-surgery along $L$ (or $L'$) is equivalent to the contact surgery along the Legendrian link shown in Figure~\ref{fig:+3/2}.
\begin{figure}[htb]
\begin{overpic}
[scale=0.5]
{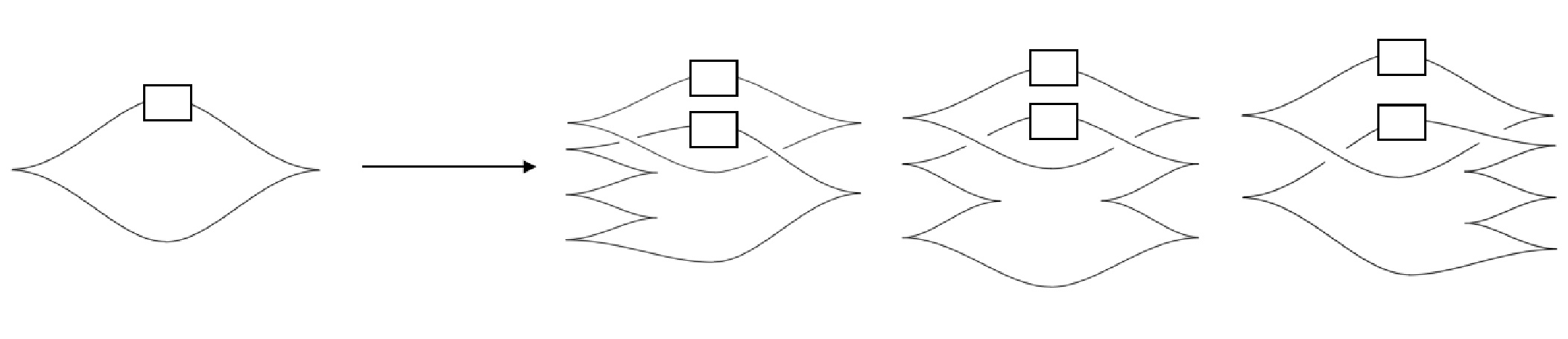} 
\put(50, 73){$L$}
\put(226, 81){$L$}
\put(226, 64){$L$}
\put(336, 67){$L$}
\put(336, 84){$L$}
\put(449, 88){$L$}
\put(449, 67){$L$}
\put(72, 77){$+\frac{3}{2}$}
\put(255, 81){$+1$}
\put(255, 27){$-1$}
\put(375, 80){$+1$}
\put(375, 20){$-1$}
\put(494, 80){$+1$}
\put(494, 15){$-1$}
\end{overpic}
\caption{Contact $\frac{3}{2}$-surgery along $L$.}
\label{fig:+3/2}
\end{figure}
    Then 
    $$\boldsymbol{t}=\begin{pmatrix}
	t\\
	t-2
	\end{pmatrix},~
    Q=\begin{pmatrix}
	t+1&t\\
	t&t-3
	\end{pmatrix},~
    \boldsymbol{r}=\begin{pmatrix}
	r\\
	r_2
	\end{pmatrix},r_2=r+2,r,r-2.$$
    $$\boldsymbol{t'}=\begin{pmatrix}
	-3-t\\
	-5-t
	\end{pmatrix},~
    Q'=\begin{pmatrix}
	-2-t&-3-t\\
	-3-t&-6-t
	\end{pmatrix},~
    \boldsymbol{r'}=\begin{pmatrix}
	r'\\
	r_2'
	\end{pmatrix},r_2'=r'+2,r',r'-2.$$
Note that $\det(Q)=-2t-3$, $\text{tr}(Q)=2t-2$, $\det(Q')=2t+3$, $\text{tr}(Q')=-2t-8$.
    We solve the equations $Q\boldsymbol{b}=\boldsymbol{r}$ and $Q'\boldsymbol{b}'=\boldsymbol{r}'$ in each case.
    $$\boldsymbol{b}^1=(\frac{2t+3r}{2t+3},-\frac{2t+r+2}{2t+3})^T,~\boldsymbol{b}^2=(\frac{3r}{2t+3},\frac{-r}{2t+3})^T,~\boldsymbol{b}^3=(\frac{-2t+3r}{2t+3},\frac{2t-r+2}{2t+3})^T;$$
$$\boldsymbol{b}'^{1}=(\frac{2t-3r+6}{2t+3},\frac{-2t+r-4}{2t+3})^T,~\boldsymbol{b}'^{2}=(\frac{-3r}{2t+3},\frac{r}{2t+3})^T,~\boldsymbol{b}'^{3}=(\frac{-2t-3r-6}{2t+3},\frac{2t+r+4}{2t+3})^T.
$$
    
When $t\ge -1$, it follows from the determinants and traces that $\sigma(Q)=0$, $\sigma(Q')=-2$. So
    $$d_3(\xi^1)=\frac{(r-1)^2}{2(2t+3)}-1,
    ~d_3(\xi^2)=\frac{r^2}{2(2t+3)}-\frac{1}{2},
    ~d_3(\xi^3)=\frac{(r+1)^2}{2(2t+3)}-1, $$
    $$d_3(\xi'^1)=\frac{-(r'-1)^2}{2(2t+3)}+\frac{1}{2}, ~d_3(\xi'^2)=\frac{-r'^2}{2(2t+3)}+1,
    ~d_3(\xi'^3)=\frac{-(r'+1)^2}{2(2t+3)}+\frac{1}{2}.$$ 

When $t\le-2$, $\sigma(Q)=-2$, $\sigma(Q')=0$. So
    $$d_3(\xi^1)=\frac{(r-1)^2}{2(2t+3)}+\frac{1}{2}, ~ 
    d_3(\xi^2)=\frac{r^2}{2(2t+3)}+1,~
    d_3(\xi^3)=\frac{(r+1)^2}{2(2t+3)}+\frac{1}{2}, ~ $$
    $$d_3(\xi'^1)=\frac{-(r'-1)^2}{2(2t+3)}-1, ~ 
    d_3(\xi'^2)=\frac{-r'^2}{2(2t+3)}-\frac{1}{2}, ~
    d_3(\xi'^3)=\frac{-(r'+1)^2}{2(2t+3)}-1. $$
For either $t\geq-1$ or $t\leq-2$, if $3|2t+3|$ cannot be expressed as a sum of two squares, then $$\{d_3(\xi^1), d_3(\xi^2),d_3(\xi^3)\}\neq\{d_3(\xi'^1),  d_3(\xi'^2), d_3(\xi'^3)\}.$$ So the contact structures are not contactomorphic. 

\textbf{Case 3}, $p=1$. The contact $\frac{1}{2}$-surgery along $L$ (or $L'$) is equivalent to the $(+1)$-surgeries along two push-offs of $L$ (or $L'$). Then
     \begin{align*}
	Q=\begin{pmatrix}
	t+1&t\\
	t&t+1
	\end{pmatrix},
    \boldsymbol{r}=\begin{pmatrix}
	r\\
	r
	\end{pmatrix},
    \boldsymbol{b}=\begin{pmatrix}
	\frac{r}{2t+1}\\
	\frac{r}{2t+1}
	\end{pmatrix};~
    Q'=\begin{pmatrix}
	-t&-t-1\\
	-t-1&-t
	\end{pmatrix},
    \boldsymbol{r'}=\begin{pmatrix}
	r'\\
	r'
	\end{pmatrix},
    \boldsymbol{b'}=\begin{pmatrix}
	-\frac{r'}{2t+1}\\
	-\frac{r'}{2t+1}
	\end{pmatrix}.
    \end{align*}
The two eigenvalues of $Q$ is 1 and $2t+1$, and the two eigenvalues of $Q'$ is 1 and $-1-2t$. If $t\ge0$, then $\sigma(Q)=2$ and $\sigma(Q')=0$. It follows that $d_3(\xi)=\frac{r^2}{2(2t+1)}-1$ and $d_3(\xi')=\frac{-r'^2}{2(2t+1)}+\frac{1}{2}$. So $d_3(\xi)=d_3(\xi')$ implies $r^2+r'^2=3(2t+1)$. If $t\le -1$, then $\sigma(Q)=0$ and $\sigma(Q')=2$.  It follows that $d_3(\xi)=\frac{r^2}{2(2t+1)}+\frac{1}{2}$ and $d_3(\xi')=\frac{-r'^2}{2(2t+1)}-1$. So $d_3(\xi)=d_3(\xi')$ implies $r^2+r'^2=-3(2t+1)$. Thus, $3|2t+1|$ can be expressed as a sum of two squares.

\textbf{Case 4},  $p\le-1$. Since $\frac{p}{2}=[\frac{p-1}{2}, -2]$, the contact $\frac{p}{2}$-surgery along $L$ (or $L'$) is equivalent to the contact surgery along the Legendrian link shown in Figure~\ref{fig:-p/2}, where $L_1=S_+^iS_-^{-\frac{p+1}{2}-i}(L),i=0,1,\ldots,-\frac{p+1}{2}$. Then the rotation number of $L_1$ is $r_i=r+\frac{p+1}{2}+2i,i=0,1,\ldots,-\frac{p+1}{2}$. 

\begin{figure}[htb]
\begin{overpic}
[scale=0.5]
{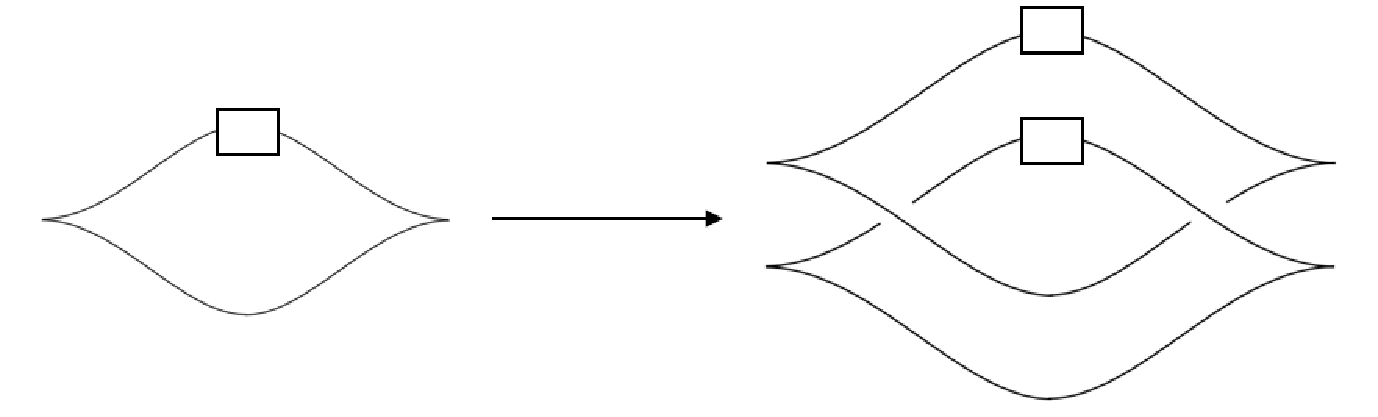} 
\put(56,65){$L$}
\put(249,90){$L_1$}
\put(249,63){$L_1$}
\put(90,60){$\frac{p}{2}$}
\put(300,70){$-1$}
\put(300,20){$-1$}
\end{overpic}
\caption{Contact $\frac{p}{2}$-surgery along $L$, where $p\le-1$.}
\label{fig:-p/2}
\end{figure}


Then
     \begin{align*}
	Q=\begin{pmatrix}
	t+\frac{p-1}{2}&t+\frac{p+1}{2}\\
	t+\frac{p+1}{2}&t+\frac{p-1}{2}
	\end{pmatrix},
    \boldsymbol{r}=\begin{pmatrix}
	r_i\\
	r_i
	\end{pmatrix},
    \boldsymbol{b}=\begin{pmatrix}
	\frac{r_i}{p+2t}\\
	\frac{r_i}{p+2t}
	\end{pmatrix},
    \end{align*}
    \begin{align*}
    Q'=\begin{pmatrix}
	-t-\frac{p+1}{2}&-t-\frac{p-1}{2}\\
	-t-\frac{p-1}{2}&-t-\frac{p+1}{2}
	\end{pmatrix},
    \boldsymbol{r'}=\begin{pmatrix}
	r_j\\
	r_j
	\end{pmatrix},
    \boldsymbol{b'}=\begin{pmatrix}
	\frac{-r_j}{p+2t}\\
	\frac{-r_j}{p+2t}
	\end{pmatrix}.
    \end{align*}
We have  $\det(Q)=-(p+2t)$ and $\det(Q')=p+2t$. When $p+2t>0$, $\sigma(Q)=0$ and $\sigma(Q')=-2$. So
$$d_3(\xi^i)=\frac{r_i^2}{2(p+2t)}-\frac{3}{2}, ~d_3(\xi'^j)=-\frac{r_j^2}{2(p+2t)}, ~i,j\in\{0,1,\ldots,-\frac{p+1}{2}\}.$$
When $p+2t<0$, $\sigma(Q)=-2$ and $\sigma(Q')=0$. So
$$d_3(\xi^i)=\frac{r_i^2}{2(p+2t)}, ~d_3(\xi'^j)=-\frac{r_j^2}{2(p+2t)}-\frac{3}{2}, ~i,j\in\{0,1,\ldots,-\frac{p+1}{2}\}.$$
In both cases, if $3|p+2t|$ cannot be expressed as a sum of two squares, $d_3(\xi^i)\neq d_3(\xi'^j)$. Thus, the contact structure in the two cases do not have the same $d_3$-invariants, so they are not contactomorphic.
\end{proof}

Suppose $L(\frac{p}{q})$ and $L'(\frac{p}{q})$ are contactomorphic. Then the manifolds $S^3_{\frac{p}{q}+t}(L)$ and $S^3_{-\frac{p}{q}-t}(L')$ are orientation-preserving homeomorphic.
For the case $q=1$, Proposition~\ref{Prop:surcharanyknot} implies that $|p+t|$ has no prime factor of the form $4k+3$; in particular, $3\nmid |p+t|$. Lemma~\ref{Lem:d3anyknotq=1} then asserts that $6|p+t|$ is a sum of two squares. However, since the factor $3$ arises solely from the coefficient $6$, the integer $6|p+t|$ contains exactly one factor of $3$. By Theorem~\ref{notwosquares}, an integer in which the prime $3$ occurs to an odd power cannot be expressed as a sum of two squares, yielding a contradiction. When $q=2$, Proposition~\ref{Prop:sucharanyknot2} gives $|p+2t|=4l+1$. Lemma~\ref{Lem:d3anyknotq=2} then implies that $3|p+2t|=12l+3$ is a sum of two squares. But every square is congruent to $0$ or $1$ modulo $4$, so a sum of two squares is congruent to $0,1$, or $2$ modulo $4$. Yet $12l+3\equiv 3\pmod 4$, which is impossible.

Thus, both cases lead to contradictions.
\end{proof}

In the following, we prove the corollaries of Theorem~\ref{Thm:anyknot}.


\begin{proof}[Proof of Corollary~\ref{Thm:unknot}]
According to \cite{ElFr} and \cite{EtHo01}, the unknot, right handed trefoil, left handed trefoil and figure eight knot are all Legendrian simple. By Theorem~\ref{Thm:surcharunknot} and Theorem~\ref{Thm:surchartrefoil}, all rational numbers are characterizing for these knots. So the corollary follows directly from Corollary~\ref{Thm:anyknot1}.  
\end{proof}

\begin{remark}
In \cite[Example 1.9]{CEK}, Casals, Etnyre and Kegel state the contact $(+6)$-surgeries along a Legendrian unknot with $tb=-11$ and $rot=0$, and a Legendrian right handed trefoil with $tb=-1$ and $rot=0$, are contactomorphic. However, since $6|t+p|=30$ is not the sum of two squares, this is not true by Lemma~\ref{Lem:d3anyknotq=1}. 
\end{remark}

\begin{proof}[Proof of Corollary~\ref{Thm:52}]
According to \cite{EtNgVe13}, the knot $5_2$ is Legendrian simple with maximal Thurston-Bennequin invariant $-8$. The knot $\overline{5_2}$ is not Legendrian simple. It has maximal Thurston-Bennequin invariant $1$. A Legendrian $\overline{5_2}$ is determined by its Thurston-Bennequin invariant and rotation number unless it has $tb=1$. By Theorem~\ref{Thm:surchar52}, if $r$ is a rational number other than a positive integer, then $r$ is characterizing for $5_2$.  So, if $r$ is a rational number other than a negative integer, then $r$ is characterizing for  $\overline{5_2}$. Hence the parts (1) and (2) follow directly from Corollary~\ref{Thm:anyknot1}. 

By \cite{EtNgVe13}, there are exactly two Legendrian representatives of $\overline{5_2}$ with maximal Thurston-Bennequin invariant $1$. We denote them by $L$ and $\tilde{L}$. Note that $0$ is characterizing for $\overline{5_2}$. By Theorem~\ref{Thm:anyknot}, if $L(-1)$ is contactomorphic to $L'(-1)$ for some Legendrian knot $L'$ in $(S^3, \xi_{st})$, then $L'$ is a Legednrian representative of $\overline{5_2}$ with $tb=1$. On the other hand,  according to \cite[Section 7.3]{BEE12},  $L(-1)$ and $\tilde{L}(-1)$ are not contactomorphic. So $L'$ is isotopic to $L$. The part (3) also holds.
\end{proof}

\begin{proof}[Proof of Corollary~\ref{Thm:T52}]
According to \cite{EtHo01}, the torus knot $T_{5,2}$ and $T_{5,-2}$ are both Legendrian simple.  By Theorem~\ref{Thm:surcharT52}, suppose $r>-1$ is a rational number, and $r\notin\{0, 1, \pm\frac{1}{2}, \pm\frac{1}{3}\}$, then $r$ is a characterizing slope for $T_{5,2}$. So
if $r<1$ is a rational number, and $r\notin\{0, -1, \pm\frac{1}{2}, \pm\frac{1}{3}\}$, then $r$ is a characterizing slope for $T_{5,-2}$. Thus the corollary follows from Corollary~\ref{Thm:anyknot1}.
\end{proof}
\bigskip

\bibliographystyle{alpha} 
\bibliography{refs}
\Addresses
\end{document}